\documentclass[a4paper,oneside,english]{amsart}
\usepackage{libertineRoman}
\usepackage{avant}
\usepackage{libertineMono}
\usepackage[T1]{fontenc}
\usepackage[latin9]{inputenc}
% \synctex=-1
\usepackage{babel}
\usepackage{pifont}
\usepackage{textcomp}
\usepackage{mathtools}
\usepackage{enumitem}
\usepackage{amstext}
\usepackage{amsthm}
\usepackage{amssymb}
\usepackage{setspace}
\usepackage{xargs}[2008/03/08]
\onehalfspacing
\usepackage[unicode=true,pdfusetitle,
	bookmarks=true,bookmarksnumbered=false,bookmarksopen=false,
	breaklinks=false,pdfborder={0 0 0},pdfborderstyle={},backref=false,colorlinks=false]
{hyperref}

\makeatletter

%%%%%%%%%%%%%%%%%%%%%%%%%%%%%% Textclass specific LaTeX commands.
\numberwithin{equation}{section}
\numberwithin{figure}{section}
      % auxiliary length 
\theoremstyle{plain}
\newtheorem*{thm*}{\protect\theoremname}
\theoremstyle{plain}
\newtheorem{thm}{\protect\theoremname}
\theoremstyle{definition}
\newtheorem{defn}[thm]{\protect\definitionname}
\theoremstyle{plain}
\newtheorem{lem}[thm]{\protect\lemmaname}
\theoremstyle{remark}
\newtheorem{rem}[thm]{\protect\remarkname}
\theoremstyle{remark}
\newtheorem{notation}[thm]{\protect\notationname}
\theoremstyle{remark}
\newtheorem{claim}[thm]{\protect\claimname}
\theoremstyle{plain}
\newtheorem{cor}[thm]{\protect\corollaryname}
\newlist{casenv}{enumerate}{4}
\setlist[casenv]{leftmargin=*,align=left,widest={iiii}}
\setlist[casenv,1]{label={{\itshape\ \casename} \arabic*.},ref=\arabic*}
\setlist[casenv,2]{label={{\itshape\ \casename} \roman*.},ref=\roman*}
\setlist[casenv,3]{label={{\itshape\ \casename\ \alph*.}},ref=\alph*}
\setlist[casenv,4]{label={{\itshape\ \casename} \arabic*.},ref=\arabic*}

%%%%%%%%%%%%%%%%%%%%%%%%%%%%%% User specified LaTeX commands.
% \AtBeginDocument{
% 	\abovedisplayskip=4pt
% 	\abovedisplayshortskip=2pt
% 	\belowdisplayskip=4pt
% 	\belowdisplayshortskip=2pt
% }

\AtBeginDocument{

}

\makeatother

\providecommand{\casename}{Case}
\providecommand{\claimname}{Claim}
\providecommand{\corollaryname}{Corollary}
\providecommand{\definitionname}{Definition}
\providecommand{\lemmaname}{Lemma}
\providecommand{\notationname}{Notation}
\providecommand{\remarkname}{Remark}
\providecommand{\theoremname}{Theorem}

%% macros

%
%
%
%
%
%
%
%
%
\newcommand\power{\mathcal{P}}%
\newcommand\LL{\mathcal{L}}%
\newcommand\zfc{\mathrm{ZFC}}%

\newcommand\dom{\operatorname{Dom}}%
\newcommand\rng{\operatorname{Range}}%
\newcommand\ord{{\bf Ord}}%
\newcommand\lp{\operatorname{lp}}%
\newcommand\meas{\operatorname{meas}}%
\newcommandx\sing[1][usedefault, addprefix=\global, 1=]{{\bf Sing}_{#1}}%
\newcommand\lex{\mathrm{Lex}}%
\newcommand\id{\operatorname{Id}}%
\newcommand\ult{\operatorname{Ult}}%
\newcommand\It{\operatorname{It}}%
\newcommand\cof[1]{\mathrm{cf}\left(#1\right)}%
\newcommand\otp{\operatorname{otp}}%
\newcommand\psu[2]{\prescript{#1}{}{#2}}%
\newcommand\con{\subseteq}%
\newcommand\til{,\dots,}%
\newcommand\smin{\mathord{\smallsetminus}}%
\newcommand\mets{\mathord{\upharpoonright}}%

%
%
%
%
%
%
%
%
%
%
%
%
%

%%%%%%%%%%%%%%%%%
\title[Models for short sequences of measures in $C^{*}$]{Models for short sequences of measures \\ in the cofinality-$\omega$
	constructible model}
\author{Ur Ya'ar}
% \date{March 13, 2023}
\address{\noindent Einstein Institute of Mathematics \newline Hebrew University
	of Jerusalem\newline Edmond J. Safra Campus, Givat Ram \newline
	Jerusalem 91904, ISRAEL}
\email{ur.yaar@mail.huji.ac.il}
\thanks{I would like to thank my advisor, Prof. Menachem Magidor, for his
guidance and support without which this work would not have been possible.
I would also like to thank Jouko V\"{a}\"{a}n\"{a}nen and Otto Rajala
for reading and commenting on a previous draft of this paper.}

\makeatletter
\@namedef{subjclassname@2020}{%
	\textup{2020} Mathematics Subject Classification}
\makeatother
\subjclass{03E45 (Primary) 03E47, 03E55, 03E70 (Secondary)}
\keywords{Inner models, Cofinality logic, Measurable cardinal, Generalized constructibility, Core model}

\begin{document}
\begin{abstract}
	We investigate the relation between $C^{*}$, the model of sets constructible
	using first order logic augmented with the ``cofinality-$\omega$''
	quantifier, and ``short'' sequences of measures -- sequences of
	measures of order $1$, which are shorter than their minimum. We show
	that certain core models for short sequences of measures are contained
	in $C^{*}$; we compute $C^{*}$ in a model of the form $L\left[\mathcal{U}\right]$
	where $\mathcal{U}$ is a short sequence of measures, and in models
	of the form $L\left[\mathcal{U}\right]\left[G\right]$ where $G$
	is generic for adding Prikry sequences to some of the measurables
	of $\mathcal{U}$; and prove that if there is an inner model with
	a short sequence of measures of order type $\chi$, then there is
	such an inner model in $C^{*}$.
\end{abstract}
\maketitle
\section{Introduction}

The model $C^{*}$, introduced by Kennedy, Magidor and V\"{a}\"{a}n\"{a}nen
in \cite{IMEL}, is the model of sets constructible using the logic
$\LL(Q_{\omega}^{\mathrm{cf}})$ -- first order logic augmented with
the ``cofinality-$\omega$'' quantifier. As shown in \cite{IMEL},
this is the same as $L[\ord_{\omega}]$ where $\ord_{\omega}=\{\alpha\in\ord\mid\cof{\alpha}=\omega\}$.
Like $L$ -- the model of sets constructible using just first order
logic -- this is a model of $\zfc$. It is however in many cases
much larger than $L$, for example in the presence of large cardinals.
Specifically, Kennedy, Magidor and V\"{a}\"{a}n\"{a}nen  prove in
\cite{IMEL} the following:
\begin{thm*}
	\begin{enumerate}
		\item $C^{*}$ contains the Dodd-Jensen core model.
		\item If there is an inner model for a measurable cardinal, then $C^{*}$
		      contains an inner model for a measurable cardinal.
		\item If $V=L^{\mu}$ for some measure $\mu$ then $C^{*}=M_{\omega^{2}}\left[\left\langle \kappa_{\omega\cdot n}\mid n<\omega\right\rangle \right]$
		      where $M_{\omega^{2}}$ is the $\omega^{2}$-iterate of $V$ and $\kappa_{\omega\cdot n}$
		      is the $\omega\cdot n$ image of the measurable under the iteration
		      embedding.
	\end{enumerate}
\end{thm*}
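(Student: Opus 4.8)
The plan is to treat statement (3) as the heart of the matter and obtain (1) and (2) by absoluteness arguments that the same analysis makes transparent. The single semantic fact driving everything is that in the construction of $C^{*}$ the quantifier $Q_{\omega}^{\mathrm{cf}}$ is interpreted by \emph{true} ($V$-)cofinality: at a successor stage one closes under the subsets of the current level that are definable over it in $\LL(Q_{\omega}^{\mathrm{cf}})$, where $Q_{\omega}^{\mathrm{cf}}xy\,\fii$ holds exactly when the linear order defined by $\fii$ has cofinality $\omega$ as computed in $V$. Thus $C^{*}$ can recognise, for definable families of ordinals, which of them carry genuine cofinality $\omega$, and it is this imported information that makes $C^{*}$ larger than $L$.

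For (3), assume $V=L[\mathcal{U}]$ and form the linear iterated ultrapowers $\langle M_{\alpha},j_{\alpha\beta}\rangle$ with critical points $\kappa_{\alpha}=\crit{j_{\alpha,\alpha+1}}=j_{0\alpha}(\kappa)$, so $M_{\alpha}=L[\mathcal{U}_{\alpha}]$ with $\mathcal{U}_{\alpha}=j_{0\alpha}(\mathcal{U})$ a measure on $\kappa_{\alpha}$, and the class $\{\kappa_{\alpha}\}$ is a closed unbounded class of indiscernibles. Since the critical sequence is continuous at limits, $\cof{\kappa_{\lambda}}=\cof{\lambda}$, so the indiscernibles of $V$-cofinality $\omega$ below $\kappa_{\omega^{2}}$ are exactly the $\kappa_{\omega\cdot n}$ for $1\le n<\omega$, and these are cofinal in $\kappa_{\omega^{2}}$. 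I would then prove the two inclusions separately. For $M_{\omega^{2}}[\langle\kappa_{\omega\cdot n}\mid n<\omega\rangle]\con C^{*}$: show that $C^{*}$ defines the club class of former critical points and uses $Q_{\omega}^{\mathrm{cf}}$ to isolate the cofinality-$\omega$ indiscernibles, and that from this sequence and the definable structure around it one recovers the iterate $M_{\omega^{2}}=L[\mathcal{U}_{\omega^{2}}]$ together with its measure. The key observation is that in $C^{*}$ the ordinal $\kappa_{\omega^{2}}$ has cofinality $\omega$ --- it is the supremum of the added sequence --- so $\mathcal{U}_{\omega^{2}}$ survives only as a set and not as a $C^{*}$-measure, while it remains a measure inside the inner model $M_{\omega^{2}}$; the sequence $\langle\kappa_{\omega\cdot n}\mid n<\omega\rangle$ thus behaves exactly like a Prikry sequence singularising the measurable of $M_{\omega^{2}}$. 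For the reverse inclusion $C^{*}\con M_{\omega^{2}}[\langle\kappa_{\omega\cdot n}\mid n<\omega\rangle]$ one uses $C^{*}\con L[\mathcal{U}]$ together with the fact that $\LL(Q_{\omega}^{\mathrm{cf}})$-definability cannot separate individual critical points past $\omega^{2}$.

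Statements (1) and (2) should then fall out of the same circle of ideas. For (2), given any inner model $L[\mathcal{U}]$ with a measurable, I would iterate its measure high enough and argue, as above, that a tail iterate playing the role of $M_{\omega^{2}}$ is definable in $C^{*}$ via the cofinality-$\omega$ indiscernibles; this iterate is an inner model of $C^{*}$ satisfying ``there is a measurable.'' For (1), where there is no inner model with a measurable, $K^{\mathrm{DJ}}$ is assembled from mice, and mousehood is absolute: iterability amounts to wellfoundedness of all iterates, which is absolute between transitive class models containing $\ord$, and comparison is likewise absolute. Since $C^{*}\supseteq L$ is a model of $\zfc$ that, using $Q_{\omega}^{\mathrm{cf}}$ to capture the needed indiscernibles, is closed under the mouse operation, it recognises the same mice as $V$ and therefore contains $K^{\mathrm{DJ}}$.

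The main obstacle I expect is the upper bound in (3), namely $C^{*}\con M_{\omega^{2}}[\langle\kappa_{\omega\cdot n}\mid n<\omega\rangle]$: one must show that the cofinality quantifier yields nothing beyond the countably many indiscernibles $\kappa_{\omega\cdot n}$ and the iterate $M_{\omega^{2}}$ --- in particular that $\mathcal{U}$ itself is not recovered, so that $C^{*}\subsetneq V$. I would attack this through a homogeneity/indiscernibility argument: exhibit order-automorphisms of the critical class that preserve the $V$-cofinality-$\omega$ predicate and fix $\kappa_{\omega\cdot n}$ for $n<\omega$ while moving critical points above $\kappa_{\omega^{2}}$. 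Since $C^{*}$ is definable and such maps preserve all the data the quantifier can see, they must fix $C^{*}$ pointwise, which forces the critical points past $\omega^{2}$ --- and hence $\mathcal{U}$ --- out of $C^{*}$ and pins the model down to $M_{\omega^{2}}[\langle\kappa_{\omega\cdot n}\mid n<\omega\rangle]$.
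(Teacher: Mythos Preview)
This theorem is not proved in the present paper: it is quoted from \cite{IMEL} in the introduction as background, and the paper's own contributions are the generalisations in Theorems~\ref{thm:mice in C*} and~\ref{thm:C-star-in-L(U)} and Section~\ref{sec:Inner-models-in-C-star}. So there is no proof here to compare against directly; one can only compare your outline to the method visible in those generalisations.

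Your lower-bound sketch for (3) and the identification of the $\kappa_{\omega\cdot n}$ as the cofinality-$\omega$ regulars in the relevant interval are in line with the paper's approach (cf.\ Claims~\ref{claim:equal-power-set} and~\ref{claim:iteration-points}). The upper bound, however, has a genuine gap. Permutations of the critical sequence induce elementary embeddings between iterates, not automorphisms of $V$, and there is no reason such maps preserve the \emph{external} predicate ``$\cof{\alpha}=\omega$'' on arbitrary ordinals; so the step ``such maps preserve all the data the quantifier can see, hence fix $C^{*}$ pointwise'' does not go through. The argument actually used (see the end of the proof of Theorem~\ref{thm:C-star-in-L(U)}) is direct rather than symmetry-based: one shows that $M_{\omega^{2}}[\langle\kappa_{\omega\cdot n}\rangle]$ can \emph{compute} $V$-cofinality-$\omega$, because the only ordinals on which $V$ and $M_{\omega^{2}}$ disagree about cofinality $\omega$ are those whose $M_{\omega^{2}}$-cofinality lies in $\{\kappa_{\omega\cdot n}\mid 1\le n<\omega\}\cup\{\kappa_{\omega^{2}}\}$. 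Inside $M_{\omega^{2}}[\langle\kappa_{\omega\cdot n}\rangle]$ one then replaces $Q_{\omega}^{\mathrm{cf}}$ by the first-order condition ``$\mathrm{cf}(\alpha)\in\{\omega,\kappa_{\omega^{2}}\}\cup\{\kappa_{\omega\cdot n}\mid n\ge 1\}$'' and recovers $C^{*}$ exactly.

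Your sketch for (1) also misses the essential point. Absoluteness of iterability and comparison does not by itself place a mouse in $C^{*}$: the obstacle is that the \emph{measure} on the mouse need not be in $C^{*}$ a priori. The mechanism (visible in Theorem~\ref{thm:mice in C*} and Lemma~\ref{lem:characterize-regulars}) is that after iterating, the ordinals regular in the iterate split into iteration points and ordinals of a fixed $V$-cofinality $\delta_{M}$; the quantifier $Q_{\omega}^{\mathrm{cf}}$ lets $C^{*}$ separate these two kinds, recover a cofinal set of critical points, and hence reconstruct the iterated measure.
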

Our aim in this paper is to generalize this theorem to obtain inner
models with more measurable cardinals. Our focus here is on ``short''
sequences of measures -- a notion introduced by Koepke in \cite{koepkedoc,koepke1988}
-- sequences of measures of order $1$, which are shorter than their
minimum (so the sequence doesn't ``stretch'' when taking ultrapowers).
In section \ref{sec:Short-mice} we define the notions of \emph{short
	mice} and \emph{short core models}, and present some basic facts concerning
them; in section \ref{sec:Short-mice-C-star} we show that certain
short core models are contained in $C^{*}$; in section \ref{sec:The-C-star-of}
we compute $C^{*}$ in a model of the form $L\left[\mathcal{U}\right]$
where $\mathcal{U}$ is a short sequence of measures, and in models
of the form $L\left[\mathcal{U}\right]\left[G\right]$ where $G$
is generic for adding Prikry sequences to some of the measurables
of $\mathcal{U}$; and in section \ref{sec:Inner-models-in-C-star}
we prove that if there is an inner model with a short sequence of
measures of order type $\chi$, then there is such an inner model
in $C^{*}$.

It is interesting to note that while we can get inner models with
many measurable cardinals contained in $C^{*}$, it is yet unknown
whether $C^{*}$ can actually contain even a single measurable cardinal.
By \cite[theorem 5.7]{IMEL}, if $V=C^{*}$ then there is no measurable
cardinal in $V$, however $C^{*}$ does not necessarily satisfy $V=C^{*}$,
so it might still possible to have a measurable cardinal in $C^{*}$,
which would simply imply that $\left(C^{*}\right)^{C^{*}}\ne C^{*}$.

\section{\label{sec:Short-mice}Short mice }

First we cite some definitions from \cite{koepke1988} (as they appear
in \cite{lucke-schlicht}).
\begin{defn}
	Let $D$ be a class. $D$ is simple if the following statements hold:
	\begin{enumerate}
		\item If $x\in D$, then $x=\langle\delta,a\rangle$ for some $\delta\in\ord$
		      and $a\subseteq\delta$;
		\item If $\langle\delta,a\rangle\in D$, then $\langle\delta,\delta\rangle\in D$.
	\end{enumerate}
	For a simple $D$, we define $\dom(D)=\{\delta\mid\langle\delta,\delta\rangle\in D\}$
	and $D(\delta)=\{a\subseteq\delta\mid\langle\delta,a\rangle\in D\}$
	for all $\delta\in\dom(D)$. $D$ is called \emph{a sequence of measures}
	if $D$ is simple and $D(\delta)$ is a normal ultrafilter on $\delta$
	for every $\delta\in\dom(D)$
\end{defn}

\begin{defn}
	Let $D$ be a simple set.
	\begin{enumerate}
		\item We say that $M=\left\langle |M|,F_{M}\right\rangle $ is a \emph{premouse
			      over $D$} if the following statements hold:
		      \begin{enumerate}
			      \item $|M|$ is a transitive set and $F_{M}$ is a simple set with
			            \[
				            \sup(\dom(D))<\min\left(\dom\left(F_{M}\right)\right)\in|M|;
			            \]
			      \item $\left\langle |M|,\in,F_{M}\right\rangle \vDash$``$F_{M}$ is a sequence
			            of measures'';
			      \item $|M|=J_{\alpha_{M}}\left[D,F_{M}\right]$ for some ordinal $\alpha_{M}$
			            with $\omega\alpha_{M}=|M|\cap\ord$.
		      \end{enumerate}
		\item If $M$ is a premouse over $D$, we denote \emph{the measurables of
			      $M$} by
		      \[
			      \meas(M):=\dom\left(F_{M}\right)\cap(\omega\alpha_{M})
		      \]
		      and \emph{the lower part of $M$} by
		      \[
			      \lp(M):=H(\min(\meas(M)))^{|M|}
		      \]
		      (where $H(\theta)$ is the set of sets hereditarily of size $<\theta$).
		\item Given a premouse $M$ over $D$ and $\delta\in\meas(M)$, a premouse
		      $N$ over $D$ is the \emph{ultrapower} of $M$ at $\delta$ if there
		      is a unique map $j:|M|\to|N|$ with the following properties:
		      \begin{enumerate}
			      \item $j:\left\langle |M|,\in,D,F_{M}\right\rangle \to\left\langle |N|,\in,D,F_{N}\right\rangle $
			            is $\Sigma_{1}$-elementary.
			      \item $|N|=\left\{ j(f)(\delta)\mid f\in\psu{\delta}{|M|}\cap|M|\right\} $.
			      \item $F_{M}(\delta)\cap|M|=\{x\in\power(\delta)\cap|M|\mid\delta\in j(x)\}$.
		      \end{enumerate}
		      If such a premouse exists, we denote it by $\ult(M,F(\delta))$ and
		      we call the corresponding map $j$ the ultrapower embedding of $M$
		      at $\delta.$
		\item Given a premouse $M$ over $D$ and a function $I:\lambda\to\ord$
		      with $\lambda\in\ord$, a system
		      \[
			      \It(M,I)=\left\langle \left\langle M_{\alpha}\mid\alpha\leq\lambda\right\rangle ,\left\langle j_{\alpha,\beta}\mid\alpha\leq\beta\leq\lambda\right\rangle \right\rangle
		      \]
		      is called the\emph{ iterated ultrapower of $M$} by $I$ if the following
		      statements hold for all $\gamma\leq\lambda:$
		      \begin{enumerate}
			      \item $M=M_{0}$ and $M_{\gamma}$ is a premouse over $D$.
			      \item Given $\alpha\leq\beta\leq\gamma,\:j_{\beta,\gamma}:\left|M_{\beta}\right|\to\left|M_{\gamma}\right|$
			            is a function, $j_{\gamma,\gamma}=\mathrm{id}_{\left|M_{\gamma}\right|}$
			            and $j_{\alpha,\gamma}=j_{\beta,\gamma}\circ j_{\alpha,\beta}.$
			      \item If $\gamma<\lambda$ and $I(\gamma)\in\meas\left(M_{\gamma}\right)$,
			            then
			            \[
				            M_{\gamma+1}=\ult\left(M_{\gamma},F_{M_{\gamma}}(I(\gamma))\right)
			            \]
			            and $j_{\gamma,\gamma+1}$ is the ultrapower embedding of $M_{\gamma}$
			            at $F_{M_{\gamma}}(I(\gamma))$. In the other case, if $\gamma<\lambda$
			            and $I(\gamma)\notin$ meas $\left(M_{\gamma}\right)$, then $M_{\gamma}=M_{\gamma+1}$
			            and $j_{\gamma,\gamma+1}=\mathrm{id}_{\left|M_{\gamma}\right|}$
			      \item If $\gamma$ is a limit ordinal, then
			            \[
				            \left\langle \left\langle M_{\gamma},\in,D,F_{M_{\gamma}}\right\rangle ,\left\langle j_{\beta,\gamma}\mid\beta<\gamma\right\rangle \right\rangle
			            \]
			            is a direct limit of the directed system
			            \[
				            \left\langle \left\langle M_{\beta},\in,D,F_{M_{\beta}}\mid\beta<\gamma\right\rangle ,\left\langle j_{\alpha,\beta}\mid\alpha\leq\beta<\gamma\right\rangle \right\rangle .
			            \]

		      \end{enumerate}
		      In this situation, we let $M_{I}$ denote $M_{\lambda}$ and let $j_{I}$
		      denote $j_{0,\lambda}$.
		\item A premouse $M$ over $D$ is\emph{ iterable} if the system $\It(M,I)$
		      is well-founded for every function $I:\lambda\to\ord$ with $\lambda\in\ord$.
		\item A premouse $M$ over $D$ is \emph{short} if one of the following
		      statements holds:
		      \begin{enumerate}
			      \item $D=\varnothing$ and $\otp(\meas(M)\cap\gamma)<\min(\meas(M))$ for
			            all $\gamma\in|M|\cap\ord$.
			      \item $D\neq\varnothing$ and $\otp(\meas(M))<\min(\dom(D))$.
		      \end{enumerate}
	\end{enumerate}
	At this stage we diverge from Koepke's terminology and pick up Mitchell's
	notion of a mouse from \cite[§3]{mitchell1984core}, in order to be
	able to use a stronger comparison lemma. Note that for short sequences,
	the notion of coherence (definition 1.6 in \cite{mitchell1984core})
	is trivial.
	\begin{enumerate}[resume]
		\item \cite[definition 3.1]{mitchell1984core} A premouse $M=J_{\alpha}\left[D,F\right]$
		      is called a \emph{(short) $D$-mouse }if it is short, iterable and
		      satisfies that there is some $\rho$, $\sup(D)\leq\rho<\min(F)$ and
		      a parameter $p\in\left(\alpha\smin\rho\right)^{<\omega}$ such that
		      $M=\mathcal{H}_{1}^{M}(\rho\cup p)$ (the $\Sigma_{1}$ Skolem hull
		      in $M$).

		      Denote by $\rho_{M}$ the least $\rho$ for which such a $p$ exists
		      and $p_{M}$ the least such parameter (under the usual parameter well-order).
		\item \cite[definition 3.2]{mitchell1984core} If $M,N$ are (short) $D$-mice,
		      denote $M<_{D}N$ if there are iterated ultrapowers $i:M\to J_{\alpha}\left[D,F\right]$,
		      $j:N\to J_{\beta}\left[D,G\right]$ such that $i\mets\rho_{M}$ and
		      $j\mets\rho_{N}$ are the identity, $\alpha\leq\beta$, $G\mets\alpha=F$
		      and $\left(\alpha,\rho_{M},i(p_{M})\right)<\left(\beta,\rho_{N},j(p_{N})\right)$
		      lexicographically.
	\end{enumerate}
\end{defn}

\begin{lem}[{\cite[theorem 3.3]{mitchell1984core}}]
	\label{lem:mice order} $<_{D}$ is a well ordering of (short) $D$
	mice such that $M<_{D}N$ implies $M\in L(N)$.
\end{lem}

\begin{rem}
	The original theorem in \cite{mitchell1984core} discusses general
	mice, and the proof uses comparison of mice by iterations. Since iterations
	of short mice preserves shortness it holds when restricting to short
	mice as well.
\end{rem}

\begin{lem}
	\label{lem:fixed cof}If $M$ is a $D$-mouse then for every $\eta\in\left(\rho_{M},\omega\alpha_{M}\right)$
	such that $M\vDash\eta$ is regular, $\mathrm{cf}^{V}(\eta)=\mathrm{cf}^{V}(\omega\alpha_{M})$.
\end{lem}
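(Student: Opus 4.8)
The plan is to exploit the soundness of $M$ — the equality $M=H_{1}^{M}(\rho_{M}\cup p_{M})$ — to write $M$ as a continuous increasing union of $\Sigma_{1}$-hulls, and to reduce the assertion to an elementary fact about cofinalities. For $\nu<\rho_{M}$ put $H_{\nu}=H_{1}^{M}(\nu\cup\{p_{M}\})$; since any $\Sigma_{1}$-Skolem term uses only finitely many ordinal parameters, $H_{\nu}=\bigcup_{\xi<\nu}H_{\xi}$ at limits, so $\langle H_{\nu}\mid\nu<\rho_{M}\rangle$ is increasing, continuous, and has union $M$. Writing $\lambda:=\ord\cap|M|=\omega\cdot\alpha_{M}$, we have $\mathrm{cf}^{V}(\alpha_{M})=\mathrm{cf}^{V}(\lambda)$, so it suffices to show $\mathrm{cf}^{V}(\eta)=\mathrm{cf}^{V}(\lambda)$. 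The elementary fact I will use twice is: \emph{if $E\colon\mu\to\delta$ is nondecreasing, continuous, cofinal into a limit ordinal $\delta$, and $E(\xi)<\delta$ for every $\xi$, then $\mathrm{cf}^{V}(\delta)=\mathrm{cf}^{V}(\mu)$}; one inequality is immediate, and the other holds because a cofinal sequence in $\delta$ of length $<\mathrm{cf}^{V}(\mu)$ would, by continuity, force $E$ to attain $\delta$.

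Next I would bring in condensation. Let $\pi_{\nu}\colon\bar{M}_{\nu}\to M$ be the inverse of the transitive collapse of $H_{\nu}$. By the condensation available for short $D$-mice (the machinery underlying Lemma~\ref{lem:mice order}), each $\bar{M}_{\nu}$ is again a short $D$-mouse, and since $H_{\nu}\ne M$ by minimality of $\rho_{M}$, it is a \emph{proper} initial segment $\bar{M}_{\nu}\po M$; in particular $\bar{M}_{\nu}\in|M|$ as soon as $H_{\nu}$ is bounded in $\lambda$. The map $\pi_{\nu}$ is $\Sigma_{1}$-elementary with $\crit{\pi_{\nu}}\ge\nu$, and for $\nu$ large enough that $\eta\in H_{\nu}$ we get $\pi_{\nu}(\bar{\eta}_{\nu})=\eta$ where $\bar{\eta}_{\nu}:=\pi_{\nu}^{-1}(\eta)$, with $\pi_{\nu}\mets\bar{\eta}_{\nu}$ order preserving and cofinal into $\eta_{\nu}:=\sup(H_{\nu}\cap\eta)$, and $\pi_{\nu}$ cofinal into $\gamma_{\nu}:=\sup(H_{\nu}\cap\ord)$. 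Hence $\mathrm{cf}^{V}(\eta_{\nu})=\mathrm{cf}^{V}(\bar{\eta}_{\nu})$ and $\mathrm{cf}^{V}(\gamma_{\nu})=\mathrm{cf}^{V}(\ord\cap\bar{M}_{\nu})$. Running an induction on $\lambda$, the inductive hypothesis applied to the shorter mouse $\bar{M}_{\nu}$ (noting $\bar{\eta}_{\nu}$ is regular in $\bar{M}_{\nu}$ with $\bar{\eta}_{\nu}>\rho_{\bar{M}_{\nu}}$, by elementarity of $\pi_{\nu}$) gives $\mathrm{cf}^{V}(\bar{\eta}_{\nu})=\mathrm{cf}^{V}(\ord\cap\bar{M}_{\nu})$, so that $\mathrm{cf}^{V}(\eta_{\nu})=\mathrm{cf}^{V}(\gamma_{\nu})$ for all large $\nu$.

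The heart of the matter — and what I expect to be the main obstacle — is a \emph{synchronization}: for every large $\nu<\rho_{M}$ one should have $\eta_{\nu}<\eta$ exactly when $\gamma_{\nu}<\lambda$, i.e.\ the two sequences reach their suprema at the same stage. The direction supplied by regularity is that boundedness in $\lambda$ forces boundedness below $\eta$: if $\gamma_{\nu}<\lambda$ then $\bar{M}_{\nu}\in|M|$, and were $\eta_{\nu}=\eta$, the set $H_{\nu}\cap\eta$ would be cofinal in $\eta$ of order type $\bar{\eta}_{\nu}<\eta$; because $\bar{M}_{\nu}\po M$ is an initial segment lying in $|M|$, acceptability lets $M$ reconstruct this set, so $M$ would see $\eta$ singular, contradicting $M\vDash\,``\eta$ is regular$"$. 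The reverse direction, that cofinality in $\ord$ drags cofinality in $\eta$ along, is the delicate point, and is exactly where I would spend the real effort, combining the inductive computation $\mathrm{cf}^{V}(\eta_{\nu})=\mathrm{cf}^{V}(\gamma_{\nu})$ with the continuity of the two sequences to exclude the pathological configuration in which $\langle\gamma_{\nu}\rangle$ stabilizes at $\lambda$ while $\langle\eta_{\nu}\rangle$ is still strictly climbing (such a configuration would make the two $V$-cofinalities differ, so it must be ruled out for genuine mice).

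Finally I would assemble the pieces. If $\gamma_{\nu}<\lambda$ for all $\nu$, then by synchronization $\eta_{\nu}<\eta$ for all $\nu$, and two applications of the elementary fact give $\mathrm{cf}^{V}(\eta)=\mathrm{cf}^{V}(\rho_{M})=\mathrm{cf}^{V}(\lambda)$. Otherwise let $\nu_{0}$ be least with $\gamma_{\nu_{0}}=\lambda$; synchronization makes $\nu_{0}$ also least with $\eta_{\nu_{0}}=\eta$, so $\langle\eta_{\nu}\mid\nu<\nu_{0}\rangle$ and $\langle\gamma_{\nu}\mid\nu<\nu_{0}\rangle$ are continuous, cofinal, and strictly below $\eta$ and $\lambda$ respectively, whence the elementary fact yields $\mathrm{cf}^{V}(\eta)=\mathrm{cf}^{V}(\nu_{0})=\mathrm{cf}^{V}(\lambda)$. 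In either case $\mathrm{cf}^{V}(\eta)=\mathrm{cf}^{V}(\lambda)=\mathrm{cf}^{V}(\alpha_{M})$, closing the induction.
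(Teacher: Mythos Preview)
Your strategy has a genuine gap at precisely the point you flag: the reverse direction of synchronization. You propose to ``combine the inductive computation with the continuity of the two sequences to exclude the pathological configuration,'' but that is not an argument; nothing you have set up prevents $\langle\gamma_{\nu}\rangle$ from reaching $\lambda$ while $\langle\eta_{\nu}\rangle$ is still strictly below $\eta$, and the inductive equality $\mathrm{cf}^{V}(\eta_{\nu})=\mathrm{cf}^{V}(\gamma_{\nu})$ gives no leverage on the \emph{stage} at which either sequence stabilizes. Several supporting claims are also not as immediate as you suggest: that the collapse $\bar{M}_{\nu}$ is an initial segment of $M$, that $\rho_{\bar{M}_{\nu}}<\bar{\eta}_{\nu}$ (the projectum is not $\Sigma_{1}$-definable, so $\Sigma_{1}$-elementarity of $\pi_{\nu}$ does not transfer it), and that $M$ can reconstruct $H_{\nu}\cap\eta$ internally (the $\Sigma_{1}$-hull of $M$ is not computed inside $M$).

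The paper sidesteps all of this by filtering along the \emph{height} rather than the width: fix $\rho_{M}\cup p_{M}$ and vary the level. For each $\beta\in(\eta,\alpha_{M})$ the $\Sigma_{1}$-Skolem function $h_{\beta}$ for $J_{\beta}[D,F]$ is an element of $M$, so the hull $h_{\beta}``(\rho_{M}\cup p_{M})$ is computed \emph{inside} $M$ and has $M$-cardinality at most $\rho_{M}<\eta$; since $M\vDash\eta$ is regular, this hull meets $\eta$ boundedly, say below $\alpha_{\beta}<\eta$. On the other hand every $\Sigma_{1}$ fact true in $M$ reflects to some $J_{\beta}[D,F]$, so these hulls exhaust $\eta$ as $\beta\to\alpha_{M}$, and the monotone map $\beta\mapsto\alpha_{\beta}$ is cofinal in $\eta$ with all values strictly below $\eta$; your own ``elementary fact'' (continuity is in fact unnecessary) then gives $\mathrm{cf}^{V}(\eta)=\mathrm{cf}^{V}(\alpha_{M})$ directly. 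The key idea you are missing is that indexing by $\beta<\alpha_{M}$ makes each partial hull a \emph{member} of $M$, so $M$-regularity of $\eta$ applies to it---no condensation, no induction on mice, no synchronization needed.
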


\begin{proof}
	$M=J_{\alpha_{M}}\left[D,F\right]$ for some $F$. Let $\eta\in\left(\rho_{M},\omega\alpha_{M}\right)$
	such that $M\vDash\eta$ is regular.

	Note that there is an ascending sequence of transitive sets
	\[
		\left\langle A_{\beta}\mid\beta<\mathrm{cf}^{V}(\omega\alpha_{M})\right\rangle
	\]
	such that $A_{\beta}\in M$ for every $\beta$ and $M=\bigcup_{\beta<\mathrm{cf}^{V}(\omega\alpha_{M})}A_{\beta}$:
	if $\alpha_{M}$ is limit, then $J_{\alpha_{M}}\left[D,F\right]=\bigcup_{\beta<\alpha_{M}}J_{\beta}\left[D,F\right]$
	so we can take some cofinal sequence of $\beta$'s in $\alpha_{M}$
	($\mathrm{cf}^{V}(\alpha_{M})=\mathrm{cf}^{V}(\omega\alpha_{M})$
	in this case). If $\alpha_{M}=\alpha+1$, then $J_{\alpha_{M}}\left[D,F\right]=\bigcup_{n<\omega}S_{\omega\alpha+n}$
	where $S_{\gamma}$ is the auxiliary Jensen hierarchy (cf. \cite[pg. 610]{handbook-finestructure}),
	where each successor step adds the images of the previous step under
	finitely many rudimentary functions, and in this case $\mathrm{cf}^{V}(\omega\alpha_{M})=\mathrm{cf}^{V}(\omega\alpha+\omega)=\omega$.

	For every $\beta<\mathrm{cf}^{V}(\omega\alpha_{M})$, $M$ has a Skolem
	function $h_{\beta}$ for $\left\langle A_{\beta},\in,D,F\right\rangle $,
	and since $M\vDash\eta$ regular, and $\rho_{M}<\eta$, $h_{\beta}``\left[\rho_{M}\cup p_{M}\right]^{<\omega}$
	is in $M$ of size $<\eta$, so
	\[
		\alpha_{\beta}:=\sup\left(h_{\beta}``\left[\rho_{M}\cup p_{M}\right]^{<\omega}\cap\eta\right)<\eta
	\]
	But since $M=\mathcal{H}_{1}^{M}(\rho_{M}\cup p_{M})$, each element
	of $\eta$ is in $h_{\beta}``\left(\rho_{M}\cup p_{M}\right)$ for
	some $\beta<\mathrm{cf}^{V}(\omega\alpha_{M})$ (if an element satisfies
	some $\Sigma_{1}$ formula in $M$, then it satisfies this formula
	in some initial segment of $M$), so the sequence $\left\langle \alpha_{\beta}\mid\beta<\mathrm{cf}^{V}(\omega\alpha_{M})\right\rangle $
	is cofinal in $\eta$, hence $\mathrm{cf}^{V}(\eta)=\mathrm{cf}^{V}(\omega\alpha_{M})$.
\end{proof}
\begin{notation}
	For a $D$-mouse $M$, denote by $\kappa(\gamma)$ be the $\gamma$th
	member of $\dom(F_{M})$ and $\xi(\gamma)=\left(\kappa(\gamma)^{+}\right)^{M}$.
	Note that for every $\gamma$ we have $\xi(\gamma)>\rho_{M}$ and
	$M\vDash\xi(\gamma)$ is regular so by lemma \ref{lem:fixed cof}
	$\mathrm{cf}^{V}\left(\xi(\gamma)\right)=\mathrm{cf}^{V}\left(\alpha_{M}\right)$.
	Denote this by $\delta_{M}$.
\end{notation}

We now establish some notation and terminology for iterations of
short $D$-mice.
\begin{notation}
	Let $M=\left\langle \left|M\right|,D,F\right\rangle $ be a short
	$D$-mouse, $\chi\leq\otp\dom(F)$, and $\vec{\theta}=\left\langle \theta_{\gamma}\mid\gamma<\chi\right\rangle $
	some sequence of ordinals. We define the \emph{iteration of $M$ according
		to $\vec{\theta}$} as follows. Set $M_{0}^{0}=M$, $F_{0}^{0}=F$
	and define an iteration $\left\langle M_{\beta}^{\gamma},i_{\alpha,\beta}^{\gamma},j^{\gamma}\mid\gamma<\chi,\text{\ensuremath{\alpha\leq\beta<\theta_{\gamma}}}\right\rangle $
	inductively on $\gamma<\chi$ such that:
	\begin{itemize}
		\item $i_{\alpha,\beta}^{\gamma}:M_{\alpha}^{\gamma}\to M_{\beta}^{\gamma}$
		      and $j^{\gamma}:M_{0}^{0}\to M_{0}^{\gamma}$ are $\Sigma_{1}$-elementary;
		\item $i_{\beta,\beta}^{\gamma}=\id_{M_{\beta}^{\gamma}}$ and for $\alpha<\beta'\leq\beta$,
		      $i_{\alpha,\beta}^{\gamma}=i_{\beta',\beta}^{\gamma}\circ i_{\alpha,\beta'}^{\gamma}$
		      ;
		\item If $M_{\beta}^{\gamma}=\langle|M_{\beta}^{\gamma}|,D,F_{\beta}^{\gamma}\rangle$
		      we set $A_{\beta}^{\gamma}=\dom(F_{\beta}^{\gamma})$;

		      Note that these are iterations of short mice so $\otp\dom(F_{\beta}^{\gamma})=\otp\dom(F)$.
		\item $\kappa_{0}^{\gamma}$ is the $\gamma$th element of $A_{0}^{\gamma}$,
		      $\xi_{0}^{\gamma}=\big(\big(\kappa_{0}^{\gamma}\big)^{+}\big)^{M_{0}^{\gamma}}$
		      or (if this doesn't exist in $M_{0}^{\gamma}$) $\xi_{0}^{\gamma}=M_{0}^{\gamma}\cap\ord$,
		      $U_{0}^{\gamma}=F_{0}^{\gamma}(\kappa_{0}^{\gamma})$ is the ultrafilter
		      of $M_{0}^{\gamma}$ on $\kappa_{0}^{\gamma}$;
		\item If $M_{0}^{\gamma}$, $\kappa_{0}^{\gamma}$, $\xi_{0}^{\gamma}$,
		      $U_{0}^{\gamma}$ are defined, iterate $M_{0}^{\gamma}$ according
		      to $U_{0}^{\gamma}$ up to $\theta_{\gamma}$ to get the iteration
		      $\left\langle M_{\beta}^{\gamma},i_{\alpha,\beta}^{\gamma}\mid\alpha,\beta\leq\theta_{\gamma}\right\rangle $
		      and set $\kappa_{\beta}^{\gamma}=i_{0,\beta}^{\gamma}(\kappa_{0}^{\gamma})$,
		      $\xi_{\beta}^{\gamma}=i_{0,\beta}^{\gamma}(\xi_{0}^{\gamma})$, $U_{\beta}^{\gamma}=i_{0,\beta}^{\gamma}(U_{0}^{\gamma})$;
		\item Set $M_{0}^{\gamma+1}=M_{\theta_{\gamma}}^{\gamma}$, $j^{\gamma+1}=i_{0,\theta_{\gamma}}^{\gamma}\circ j^{\gamma}$;
		\item For limit $\gamma$, set $M_{0}^{\gamma},j^{\gamma}$ as the directed
		      limit of $\left\langle M_{0}^{\beta},j^{\beta}\mid\beta<\gamma\right\rangle $;
		\item Let $M^{\chi}$ be the directed limit of $\left\langle M_{0}^{\beta},j^{\beta}\mid\beta<\chi\right\rangle $
		      if $\chi$ is limit or $M_{\theta_{\gamma}}^{\gamma}$ if $\chi=\gamma+1$,
		      with $j^{\chi}:M_{0}^{0}\to M^{\chi}$ the corresponding embedding.
	\end{itemize}
\end{notation}

\begin{lem}
	\label{lem:fixed-succ-cof}In the above context, for every $\gamma<\chi$
	and $\beta<\theta_{\gamma}$ we have $\mathrm{cf}^{V}(\xi_{\beta}^{\gamma})=\mathrm{cf}^{V}\left(\xi(\gamma)\right)=\delta_{M}$.
\end{lem}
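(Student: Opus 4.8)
The plan is to reduce everything to a single application of Lemma \ref{lem:fixed cof} inside the iterate $M_{\beta}^{\gamma}$, exploiting that $\xi_{\beta}^{\gamma}$ is a successor-of-measurable level there. Write $k=i_{0,\beta}^{\gamma}\circ j^{\gamma}\colon M\to M_{\beta}^{\gamma}$ for the composite iteration embedding. Every measure used in the construction sits on (an image of) some $\kappa(\delta)$ with $\kappa(\delta)\geq\kappa(0)=\min\meas(M)$, so $\crit{k}\geq\kappa(0)>\rho_{M}$; in particular $M_{\beta}^{\gamma}$ is again a short $D$-mouse, $k\mets\kappa(0)=\id$, and $\rho_{M_{\beta}^{\gamma}}\leq\rho_{M}$ (from the preserved $\Sigma_{1}$-generation $M_{\beta}^{\gamma}=H_{1}^{M_{\beta}^{\gamma}}(\rho_{M}\cup k(p_{M}))$). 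The crucial point is that $\mathrm{cf}^{V}(\alpha_{M_{\beta}^{\gamma}})=\delta_{M}$; once this is known, Lemma \ref{lem:fixed cof} applied to $M_{\beta}^{\gamma}$ at $\xi_{\beta}^{\gamma}=((\kappa_{\beta}^{\gamma})^{+})^{M_{\beta}^{\gamma}}$ -- a regular cardinal of $M_{\beta}^{\gamma}$ with $\rho_{M_{\beta}^{\gamma}}\leq\rho_{M}<\kappa(0)\leq\kappa_{\beta}^{\gamma}<\xi_{\beta}^{\gamma}<\alpha_{M_{\beta}^{\gamma}}$ -- immediately yields $\mathrm{cf}^{V}(\xi_{\beta}^{\gamma})=\mathrm{cf}^{V}(\alpha_{M_{\beta}^{\gamma}})=\delta_{M}=\mathrm{cf}^{V}(\xi(\gamma))$, the last equality being the one recorded after the Notation. (In the degenerate case where $\kappa_{\beta}^{\gamma}$ has no successor in $M_{\beta}^{\gamma}$ one has $\xi_{\beta}^{\gamma}=\alpha_{M_{\beta}^{\gamma}}$ and the conclusion is then immediate from the crucial point.)

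To establish $\mathrm{cf}^{V}(\alpha_{M_{\beta}^{\gamma}})=\delta_{M}$ I would exhibit one regular level common to $M$ and $M_{\beta}^{\gamma}$ and play Lemma \ref{lem:fixed cof} off against itself in the two mice. Since $\kappa(0)$ is measurable, hence inaccessible, in $M$ and $\rho_{M}<\kappa(0)$, the cardinal $\eta:=(\rho_{M}^{+})^{M}$ satisfies $\rho_{M}<\eta<\kappa(0)$ and is regular in $M$. Because every measure iterated has critical point $\geq\kappa(0)$, no bounded subset of $\kappa(0)$ is ever added: $\power(\delta)^{M_{\beta}^{\gamma}}=\power(\delta)^{M}$ for all $\delta<\kappa(0)$ (for an ultrapower $j$ by a measure on $\mu$ and $X\in\power(\delta)$ with $\delta<\mu$ one has $X=j(X)\cap\delta\in\ult$, and this passes to direct limits). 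Consequently $\eta=(\rho_{M}^{+})^{M_{\beta}^{\gamma}}$ is regular in $M_{\beta}^{\gamma}$ as well, with $\rho_{M_{\beta}^{\gamma}}\leq\rho_{M}<\eta<\kappa(0)<\alpha_{M_{\beta}^{\gamma}}$. Lemma \ref{lem:fixed cof} in $M$ gives $\mathrm{cf}^{V}(\eta)=\mathrm{cf}^{V}(\alpha_{M})=\delta_{M}$, while the same lemma in $M_{\beta}^{\gamma}$ gives $\mathrm{cf}^{V}(\eta)=\mathrm{cf}^{V}(\alpha_{M_{\beta}^{\gamma}})$; hence $\mathrm{cf}^{V}(\alpha_{M_{\beta}^{\gamma}})=\delta_{M}$, as required.

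I expect the main obstacle to be the bookkeeping guaranteeing that $\eta$ stays regular in the iterate, i.e.\ the preservation of $\power(\delta)$ for $\delta<\kappa(0)$ along the whole iteration, including the limit stages within a round and the direct limits between rounds; the successor-ultrapower step is the standard computation above, and at limit stages one notes that a bounded subset of $\kappa(0)$ appearing in a direct limit already occurs at an earlier stage, so nothing new is added below $\kappa(0)$. A secondary, routine point is verifying that iterates of short $D$-mice are again short $D$-mice (so that Lemma \ref{lem:fixed cof} applies) and that $\rho_{M_{\beta}^{\gamma}}\leq\rho_{M}$, both of which follow from the preservation of the $\Sigma_{1}$-generation under iteration embeddings with critical point above $\rho_{M}$. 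Once these are in place the argument is uniform in $\gamma$ and $\beta$ and needs no separate treatment of limit $\gamma$.
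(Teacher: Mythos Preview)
Your proof is correct but takes a genuinely different route from the paper. The paper argues directly: writing $j=i_{0,\beta}^{\gamma}\circ j^{\gamma}$, every $\eta<\xi_{\beta}^{\gamma}=j(\xi(\gamma))$ has the form $j(f)(\kappa_{1},\dots,\kappa_{n})$ for some $f\colon\kappa(\gamma)^{n}\to\xi(\gamma)$ in $M$; since $\xi(\gamma)$ is regular in $M$ and $\kappa(\gamma)<\xi(\gamma)$, each such $f$ is bounded, so any cofinal sequence $\langle\gamma_{\alpha}\mid\alpha<\mathrm{cf}^{V}(\xi(\gamma))\rangle$ in $\xi(\gamma)$ pushes forward under $j$ to a cofinal sequence in $\xi_{\beta}^{\gamma}$, giving $\mathrm{cf}^{V}(\xi_{\beta}^{\gamma})=\mathrm{cf}^{V}(\xi(\gamma))$ in one stroke. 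Your approach instead invokes Lemma~\ref{lem:fixed cof} inside the iterate $M_{\beta}^{\gamma}$, which obliges you to check that iterates remain $D$-mice with projectum $\leq\rho_{M}$ and to manufacture a common regular level $\eta=(\rho_{M}^{+})^{M}$ in the preserved lower part so as to link $\mathrm{cf}^{V}(\alpha_{M})$ with $\mathrm{cf}^{V}(\alpha_{M_{\beta}^{\gamma}})$. The paper's argument is shorter and more elementary, requiring only the standard representation of elements in an iterated ultrapower and the boundedness of small functions into a regular cardinal; yours has the merit of isolating the identity $\mathrm{cf}^{V}(\alpha_{M_{\beta}^{\gamma}})=\delta_{M}$ as a stand-alone fact (which the paper in any case reproves later, in the proof of Claim~\ref{claim:characterize-regulars}), at the price of importing more fine-structural bookkeeping about projectums and lower-part preservation.
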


\begin{proof}
	From the definitions, we can see that $\xi_{\beta}^{\gamma}$ is in
	$M_{\beta}^{\gamma}$ the image of $\xi(\gamma)$ under an iteration
	embedding $j:=i_{0,\beta}^{\gamma}\circ j^{\gamma}$. Thus every $\eta<\xi_{\beta}^{\gamma}$
	is of the form $j(f)\left(\kappa_{1}\til\kappa_{n}\right)$ for $f\in M_{0}^{0}$,
	$f:\kappa(\gamma)^{n}\to\xi(\gamma)$ and some iteration points $\kappa_{1}<\dots<\kappa_{n}<\kappa_{\beta}^{\gamma}$.
	$f$ must be bounded in $\xi(\gamma)$ so if $\left\langle \gamma_{\alpha}\mid\alpha<\mathrm{cf}^{V}\left(\xi(\gamma)\right)\right\rangle $
	is cofinal in $\xi(\gamma)$, there is some $\alpha$ such that $f(\zeta)<\gamma_{\alpha}$
	for every $\zeta$ so $j(f)\left(\kappa_{1}\til\kappa_{n}\right)<j(\gamma_{\alpha})$
	hence $\left\langle j(\gamma_{\alpha})\mid\alpha<\mathrm{cf}^{V}\left(\xi(\gamma)\right)\right\rangle $
	is cofinal in $\xi_{\beta}^{\gamma}$, and since $\mathrm{cf}^{V}\left(\xi(\gamma)\right)$
	is regular we have $\mathrm{cf}^{V}(\xi(\gamma))=\mathrm{cf}^{V}(\xi_{\beta}^{\gamma})$.
\end{proof}
With this terminology we can easily state a useful version of the
comparison lemma proved by Koepke:
\begin{lem}[{\cite[lemma 5.17]{koepkedoc}}]
	\label{lem:comparison} Let $M$ be an iterable pre-mouse over $D$,
	$S$ an increasing sequence of regular cardinals greater than $\left|M\right|$
	with $\otp S=\otp\meas(M)$. Then the result of iterating $M$ according
	to $S$ is of the form $\bar{M}=J_{\alpha_{\bar{M}}}\left[D,\mathrm{CU}\mets S\right]$
	where $\mathrm{CU}$ is the closed unbounded simple predicate --
	$\left\langle \kappa,z\right\rangle \in\mathrm{CU}$ iff $\kappa$
	is a limit ordinal and $z\con\kappa$ contains a closed unbounded
	subset of $\kappa$.
\end{lem}

We finish this section with the notion of the core model for short
sequences of measures.
\begin{defn}
	Define $K\left[D\right]$ as the class of sets constructible from
	short $D$-mice.
\end{defn}

\begin{thm}
	\label{thm:K(D)}
	$K\left[D\right]=\bigcup\left\{ \lp(M)\mid M\text{ is a short \ensuremath{D}-mouse}\right\} $.
\end{thm}

\begin{rem}
	This is the way $K\left[D\right]$ is defined by Koepke in \cite{koepkedoc,koepke1988},
	so our slightly different definition yields the same result.
\end{rem}

\begin{proof}
	The inclusion $\supseteq$ is clear, so we need to show that for every
	$x\in K\left[D\right]$ there is some short $D$-mouse containing
	$x$ in its lower part. $x$ is constructed by using a \emph{set}
	of mice, $X$. Let $S$ be an ascending sequence of regular cardinals
	greater than $\lambda^{+}$ where
	\[
		\lambda=\sup\left(\left\{ \left|M\right|\mid M\in X\right\} \cup\left\{ \left|\mathrm{tr}(x)\right|,\left|X\right|\right\} \right),
	\]
	such that $\otp S=\sup\left\{ \otp\meas(M)\mid M\in X\right\} $.
	We iterate each $M\in X$ according to $S$ (i.e. for $\gamma<\otp\meas(M)$,
	we iterate $M$ by its $\gamma$th measure $S(\gamma)$ many times),
	so by lemma \ref{lem:comparison}, the result of the iteration will
	be of the form $\bar{M}=J_{\alpha_{\bar{M}}}\left[D,\mathrm{CU}\mets S\right]$.
	Note that these will all be short iterable pre-mice (as iterations
	of short mice). Let $\alpha^{*}=\sup\left\{ \alpha_{\bar{M}}\mid M\in X\right\} $,
	$M^{*}=J_{\alpha^{*}}\left[D,\mathrm{CU}\mets S\right]$. We claim
	that $X\con M^{*}$: if $M\in X$, $i:M\to\bar{M}$ the iteration
	embedding, then $\bar{M}$ is an initial segment of $M^{*}$,  and
	$M\cong\mathcal{H}_{1}^{\bar{M}}\left(\rho_{M}\cup i(p_{M})\right)\in M^{*}$,
	so $M\in M^{*}$.

	Assume first that for all $M\in X$, $\alpha^{*}>\alpha_{\bar{M}}$
	and let $M^{**}$ be the transitive collapse of $\mathcal{H}_{1}^{M^{*}}(\lambda^{+})$.
	This is a short $D$-mouse, and we claim that already $X\con M^{**}$.
	Let $M\in X$, then $M=\mathcal{H}_{1}^{\bar{M}}\left(a\right)$ for
	some $a\in\bar{M}$. Since $\alpha^{*}>\alpha_{\bar{M}}$, $\bar{M}\in M^{*}$,
	so $M^{*}$ satisfies the $\Sigma_{1}$ statement ``$\exists y\exists z\left(M=\mathcal{H}_{1}^{y}\left(z\right)\right)$'',
	so such $y,z$ will be in $\mathcal{H}_{1}^{M^{*}}(\lambda^{+})$
	($\left|M\right|<\lambda$ so it can be coded as a subset of $\lambda^{+}$),
	so $M\in M^{**}$. So $x$ can be constructed in $M^{**}$, which
	is \emph{a mouse} with minimal measure above $\left|\mathrm{tr}(x)\right|$
	so $x$ is in $\lp(M^{**})$.

	Now assume $\alpha^{*}=\alpha_{\bar{N}}$ for some $N\in X$. This
	means that there is some $N\in X$ such that $M\leq_{D}N$ for every
	$M\in X$. Hence by lemma \ref{lem:mice order}, $X\con L\left[N\right]$
	so $x\in L\left[N\right]$. Assume $x\in J_{\kappa}\left[N\right]$
	and iterate $N$ to some $\bar{N}$ with $\min\meas(\bar{N})>\lambda'=\max\left\{ \lambda,\kappa\right\} ^{+}$.
	Then $x\in M^{**}:=\mathcal{H}_{1}^{\bar{N}}(\lambda')$ which is
	a mouse with minimal measure $>\left|\mathrm{tr}(X)\right|$.
\end{proof}

\section{\label{sec:Short-mice-C-star}Short mice in $C^{*}$}
\begin{thm}
	\label{thm:mice in C*}Let $D\in C^{*}$ be simple . Then any short
	$D$-mouse is in $C^{*}$. As a result, $\left(K\left[D\right]\right)^{C^{*}}=K\left[D\right]$.
\end{thm}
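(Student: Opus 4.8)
The plan is to prove the substantive assertion — that every short $D$-mouse $M$ belongs to $C^{*}$ — and then to read off the displayed equality from it. Granting membership, observe that a short $D$-mouse of $V$ that happens to lie in $C^{*}$ is a short $D$-mouse \emph{in the sense of} $C^{*}$: shortness refers only to $\otp$ and minima of $\meas(M)$ and $\dom(D)$ and so is absolute; the condensation requirement $M=H_{1}^{M}(\rho_{M}\cup p_{M})$ is internal to $M$; and linear iterability passes between $C^{*}$ and $V$ because the linear iterates are computed absolutely from $M$ and their wellfoundedness is absolute. The converse direction is identical. Hence $V$ and $C^{*}$ have the same class of short $D$-mice, with the same lower parts; since $D\in C^{*}$ the ZFC-theorem \ref{thm:K(D)} holds inside $C^{*}$, and taking unions of lower parts on both sides yields $\left(K[D]\right)^{C^{*}}=K[D]$.

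For the membership I would follow the method by which Kennedy, Magidor and V\"{a}\"{a}n\"{a}nen compute $C^{*}$ inside $L[U]$ in \cite{IMEL}, generalised to a short sequence. Using the iteration of the preceding Notation, iterate $M$ so that for each $\gamma<\chi=\otp\dom(F)$ the $\gamma$-th measure is iterated through a length $\theta_{\gamma}$ of cofinality $\omega$. Since the critical sequence of an iterated ultrapower is continuous at limits, the $\gamma$-th final image $\kappa_{\theta_{\gamma}}^{\gamma}$ is the supremum of a cofinal $\omega$-subsequence of iteration points and therefore has $\mathrm{cf}^{V}=\omega$; moreover iterating the higher measures fixes it, so it survives as a measurable of the final iterate $\bar{M}$. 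By \cite[lemma 5.17]{koepkedoc} the measure of $\bar{M}$ on each such $\kappa_{\theta_{\gamma}}^{\gamma}$ is the tail filter determined by the corresponding critical sequence, while by Lemma \ref{lem:fixed-succ-cof} every successor cardinal $\xi_{\beta}^{\gamma}$ retains the fixed value $\delta_{M}=\mathrm{cf}^{V}(\alpha_{M})$.

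Now the quantifier $Q_{\omega}^{\mathrm{cf}}$ makes $\bar{M}$ visible to $C^{*}$. Exactly as in the $L[U]$ computation, membership in the measure on $\kappa_{\theta_{\gamma}}^{\gamma}$ is determined by tails of its critical sequence, whose cofinal $\omega$-part consists of ordinals recognised by $Q_{\omega}^{\mathrm{cf}}$ as the construction proceeds. One therefore builds $\bar{M}=J_{\alpha^{*}}[D,\ldots]$ inside $C^{*}$ by a single recursion along the ordinals whose only non-first-order ingredient is the detection of $\mathrm{cf}^{V}=\omega$; as $D\in C^{*}$, this recursion is $\LL(Q_{\omega}^{\mathrm{cf}})$-definable, so $\bar{M}\in C^{*}$. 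Finally $M$ is recovered: since $M\cong H_{1}^{\bar{M}}(\rho_{M}\cup j(p_{M}))$ with $\rho_{M}$ an ordinal and $j(p_{M})\in\bar{M}\subseteq C^{*}$, the $\Sigma_{1}$-Skolem hull and its transitive collapse are computed inside $C^{*}$, whence $M\in C^{*}$.

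The hard part is the definability step: one must guarantee that the cofinality-$\omega$ ordinals detected by $C^{*}$ are exactly the image measurables $\kappa_{\theta_{\gamma}}^{\gamma}$, together with the $\omega$-subsequences generating their measures, with no spurious detections, uniformly along the whole short sequence. This is precisely where the two cofinality lemmas are used: Lemma \ref{lem:fixed cof} pins the cofinalities of the $\bar{M}$-regular ordinals above $\rho_{M}$, and Lemma \ref{lem:fixed-succ-cof} holds the successor cardinals at the fixed value $\delta_{M}$, so that — by distinguishing the $\bar{M}$-inaccessibles from the successor cardinals structurally and then using the cofinality marker to isolate the measurable ones — the recursion attaches a measure at exactly the intended ordinals. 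Managing this separation across successive measures, and in particular the borderline case $\delta_{M}=\omega$ (where the successor cardinals share the marker cofinality and an additional structural invariant must carry the distinction), is the delicate core of the argument.
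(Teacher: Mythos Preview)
Your outline captures the overall shape of the argument --- iterate $M$, show the iterate lies in $C^{*}$, and pull $M$ back as a $\Sigma_{1}$-hull --- but it is missing the mechanism that actually makes the definability step go through, and your proposed substitute is circular.

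The paper does \emph{not} try to show directly that an arbitrary short $D$-mouse is in $C^{*}$. It argues by contradiction, taking $M$ to be the $<_{D}$-\emph{minimal} short $D$-mouse not in $C^{*}$. This minimality is the whole engine of the proof: it guarantees that every mouse $N<_{D}M$ (and hence every mouse below any iterate of $M$) already lies in $C^{*}$, so that $K^{*}:=(K[D])^{C^{*}}$ agrees with each iterate $M_{\beta}^{\gamma}$ on $\power(\kappa_{\beta}^{\gamma})$ (this is Claim~\ref{claim:regulars-agree}). Consequently $C^{*}$ can test ``$\eta$ is regular in $M_{\beta}^{\gamma}$'' by testing ``$\eta$ is regular in $K^{*}$'', which it can do internally. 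Combined with Claim~\ref{claim:characterize-regulars} --- regulars of the iterate below the current critical point are either iteration points or have $V$-cofinality $\delta_{M}$ --- this lets $C^{*}$ isolate a cofinal set of genuine iteration points. The targets $\lambda_{\gamma}$ are then \emph{defined inside $C^{*}$} as the least ordinals with the right $K^{*}$-regularity/cofinality behaviour, and only afterwards does one verify in $V$ that each $\lambda_{\gamma}$ really is an image $\kappa_{\theta_{\gamma}}^{\gamma}$ for some $\theta_{\gamma}$.

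Your proposal skips the minimality step and instead suggests ``distinguishing the $\bar{M}$-inaccessibles from the successor cardinals structurally'' inside $C^{*}$. But $C^{*}$ cannot see $\bar{M}$-inaccessibility before it has $\bar{M}$; that is precisely the object you are trying to build. Lemmas~\ref{lem:fixed cof} and~\ref{lem:fixed-succ-cof} control cofinalities of ordinals that are regular \emph{in the iterate}, but they do not, by themselves, give $C^{*}$ a predicate for ``regular in the iterate''. The paper's answer is to replace that predicate with ``regular in $K^{*}$'', which is available in $C^{*}$ --- and the equality of the two predicates is exactly what minimality buys. Relatedly, simply stipulating ``iterate each measure $\theta_{\gamma}$ times with $\mathrm{cf}(\theta_{\gamma})=\omega$'' is not enough: you must exhibit, uniformly in $C^{*}$, the ordinals $\kappa_{\theta_{\gamma}}^{\gamma}$ and cofinal subsets of the critical sequences; the paper's recursive construction of the $\lambda_{\gamma}$ (conditions 1--3 there) is how this is done, and it again rests on $K^{*}$.

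Your first paragraph, deriving $(K[D])^{C^{*}}=K[D]$ from the membership claim via absoluteness of shortness and iterability and Theorem~\ref{thm:K(D)}, is fine.
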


\begin{proof}
	Assume towards contradiction this is not the case, and let $M=\left\langle \left|M\right|,D,F\right\rangle $
	be the $<_{D}$-minimal short $D$-mouse not in $C^{*}$. Our goal
	is to find an appropriate sequence $\vec{\theta}=\left\langle \theta_{\gamma}\mid\gamma<\chi\right\rangle $
	where $\chi:=\otp\dom(F)$, iterate $M$ according to $\vec{\theta}$
	and show, following the proof of \cite[theorem 5.5]{IMEL}, that in
	fact $M^{\chi}\in C^{*}$, and from that deduce that $M\in C^{*}$,
	by contradiction.

	For any sequence $\left\langle \theta_{\gamma}\mid\gamma<\chi\right\rangle $,
	the final model in the iteration is $M^{\chi}=J_{\alpha}\left[D,F_{\chi}\right]$
	for some simple $F_{\chi}$, where by the construction $F_{\chi}=j^{\chi}(F)$
	and $\otp\dom F_{\chi}=\chi$. For any $\gamma<\chi$, the $\gamma$th
	element of $F_{\chi}$ is $\kappa_{\theta_{\gamma}}^{\gamma}$, and
	the corresponding $M^{\chi}$-ultrafilter $U_{\gamma}^{\chi}$ satisfies
	that for $X\in M^{\chi}\cap\power(\kappa_{\theta_{\gamma}}^{\gamma})$,
	$X\in U_{\gamma}^{\chi}$ iff $X$ contains a final segment of $E_{\gamma}:=\left\{ \kappa_{\beta}^{\gamma}\mid\beta<\theta_{\gamma}\right\} $
	iff $X$ contains a cofinal subset of $E_{\gamma}$. So for any cofinal
	$\bar{E}\con E_{\gamma}$, if $F_{\bar{E}}$ is the filter generated
	by final segments of $\bar{E}$, then $U_{\gamma}^{\chi}=F_{\bar{E}}\cap M^{\chi}$.
	So we want to find a sequence $\vec{\theta}$ for which we can show
	the following claim which will imply that $M^{\chi}\in C^{*}$:
	\begin{claim}
		\label{claim:cofinal sequences}There is a sequence $\left\langle \bar{E}_{\gamma}\mid\gamma<\chi\right\rangle \in C^{*}$
		such that for every $\gamma$ $\bar{E}_{\gamma}$ is cofinal in $E_{\gamma}$.
	\end{claim}

	To choose the right $\vec{\theta}$ and prove the claim, we need to
	be able to identify in $C^{*}$ unboundedly many critical points of
	the iteration. For this, we need the following crucial claims:
	\begin{claim}
		\label{claim:characterize-regulars}For every $\gamma<\chi$ and $\beta$,
		if $\eta$ is such that $\kappa_{0}^{\gamma}<\eta<\kappa_{\beta}^{\gamma}$
		and $M_{\beta}^{\gamma}\vDash\eta$ is regular, then either there
		is $\alpha<\beta$ such that $\eta=\kappa_{\alpha}^{\gamma}$ or $\mathrm{cf}^{V}(\eta)=\delta_{M}$.
	\end{claim}

	\begin{claim}
		\label{claim:regulars-agree}For every $\gamma<\chi$ and $\beta\leq\theta_{\gamma}$,
		$\eta\leq\kappa_{\beta}^{\gamma}$ is regular in $M_{\beta}^{\gamma}$
		iff it is regular in $K^{*}:=\left(K\left[D\right]\right)^{C^{*}}$.
		In particular every $\kappa_{\beta}^{\gamma}$ is regular in $K^{*}$.
	\end{claim}

	Let's first assume these claims and show that claim \ref{claim:cofinal sequences}
	(and then the theorem) follows. First we need to find the appropriate
	$\left\langle \theta_{\gamma}\mid\gamma<\chi\right\rangle $. We define
	in $C^{*}$ a sequence $\left\langle \lambda_{\gamma}\mid\gamma<\chi\right\rangle $
	recursively such that $\lambda_{0}>\left|M\right|^{V}$ and for every
	$\gamma<\chi$:
	\begin{enumerate}
		\item $C^{*}\vDash\lambda_{\gamma}$ is a regular cardinal;
		\item $\gamma'<\gamma$ implies $\lambda_{\gamma'}<\lambda_{\gamma}$;
		\item $\forall\beta<\lambda_{\gamma}$ $\exists\eta\in\left(\beta,\lambda_{\gamma}\right)$
		      such that $K^{*}\vDash\eta$ is a regular cardinal, and $\mathrm{cf}^{V}(\eta)=\omega\leftrightarrow\delta_{M}\ne\omega$.
	\end{enumerate}
	Assume we've defined $\left\langle \lambda_{\gamma'}\mid\gamma'<\gamma\right\rangle \in C^{*}$
	for some $\gamma<\chi$. If $\lambda>\sup\left\langle \lambda_{\gamma'}\mid\gamma'<\gamma\right\rangle $
	is regular in $V$ then it satisfies 1-3: regularity in $V$ implies
	regularity in $C^{*}$, and 2 is clear. To show 3 iterate $M$ according
	to $\left\langle \lambda_{\gamma'}\mid\gamma'<\gamma\right\rangle ^{\frown}\left\langle \lambda\right\rangle $
	(where in the $\gamma$th stages we iterate up to $\lambda$), and
	consider the iteration points $\left\langle \kappa_{\beta}^{\gamma}\mid\beta<\lambda\right\rangle $.
	Since $\lambda$ is regular in $V$ and we can assume $>\omega_{1}$,
	there are cofinally many iteration points of cofinality $\omega$
	and cofinally many of cofinality $>\omega$. Each such iteration point
	is regular in $K^{*}$ by claim \ref{claim:regulars-agree} (to be
	proved below), so $\lambda$ satisfies 3 (choosing the appropriate
	iteration points according to $\delta_{M}$). So there are ordinals
	satisfying 1-3, and whether an ordinal satisfies 1-3 can be determined
	within $C^{*}$. So in $C^{*}$ we can define $\lambda_{\gamma}$
	to be the first ordinal $>\sup\left\langle \lambda_{\gamma'}\mid\gamma'<\gamma\right\rangle $
	(or $>\left|M\right|$ if $\gamma=0$) satisfying 1-3. Note that this
	defines the sequence $\left\langle \lambda_{\gamma}\mid\gamma<\chi\right\rangle $
	recursively in $C^{*}$ so we can in fact proceed through limit stages.

	Consider now $\lambda=\lambda_{\gamma}>\sup\left\langle \lambda_{\gamma'}\mid\gamma'<\gamma\right\rangle $
	which satisfies 1-3, and iterate $M$ only according to $\kappa(\gamma)$
	such that the image of $\kappa(\gamma)$ is greater than $\lambda$.
	By 3 $\lambda$ is the limit of ordinals $\eta$ satisfying $K^{*}\vDash\eta$
	is a regular cardinal, and $\mathrm{cf}^{V}(\eta)=\omega\leftrightarrow\delta_{M}\ne\omega$.
	By claims \ref{claim:characterize-regulars} and \ref{claim:regulars-agree}
	each such $\eta$ is an iteration point, so also $\lambda$ must be
	an iteration point. So we have some iteration $j:M\to N$ by the ultrafilter
	$F(\gamma)$ of some length $\theta$, such that $j(\kappa(\gamma))=\lambda$.
	Note that everything below $\kappa(\gamma)$ is fixed, so $j(F)\mets\lambda=F\mets\kappa(\gamma)$,
	and we can iterate $N$ according to $\left\langle \lambda_{\gamma'}\mid\gamma'<\gamma\right\rangle $.
	Since in $N$ $\lambda$ is strongly inaccessible and is above $\sup\left\langle \lambda_{\gamma'}\mid\gamma'<\gamma\right\rangle $,
	it is not moved by this iteration. It is a known fact that the resulting
	iteration is isomorphic to the iteration of $M$ according to $\left\langle \lambda_{\gamma'}\mid\gamma'<\gamma\right\rangle ^{\frown}\left\langle \theta\right\rangle $.
	So we get that in fact $\lambda$ is the image of $\kappa(\gamma)$
	under the iteration according to $\left\langle \lambda_{\gamma'}\mid\gamma'<\gamma\right\rangle ^{\frown}\left\langle \theta\right\rangle $,
	i.e. $\lambda=\kappa_{\theta}^{\gamma}$. We set $\theta_{\gamma}$
	to be this $\theta$.

	So to conclude, we've defined recursively $\left\langle \lambda_{\gamma}\mid\gamma<\chi\right\rangle $
	inside $C^{*}$, and we have (in $V$!) a sequence $\vec{\theta}=\left\langle \theta_{\gamma}\mid\gamma<\chi\right\rangle $
	such that when iterating $M$ according to $\vec{\theta}$ we have
	for every $\gamma$, $\lambda_{\gamma}=\kappa_{\theta_{\gamma}}^{\gamma}$
	satisfying the requirements.
	\begin{proof}[Proof of claim \ref{claim:cofinal sequences}]
		\renewcommand{\qedsymbol}{$\square$\footnotesize{}{Claim \ref{claim:cofinal sequences}}} We
		iterate $M$ according to $\vec{\theta}$ as above and obtain (in
		$V$) the iteration points $E_{\gamma}=\left\{ \kappa_{\beta}^{\gamma}\mid\beta<\theta_{\gamma}\right\} $.
		For every $\gamma<\chi$ set
		\[
			\bar{E}_{\gamma}=\begin{cases}
				\left\{ \eta<\lambda_{\gamma}\mid\eta\text{ is regular in \ensuremath{K^{*}} and }\mathrm{cf}^{V}\left(\eta\right)=\omega\right\} & \delta_{M}\ne\omega \\
				\left\{ \eta<\lambda_{\gamma}\mid\eta\text{ is regular in \ensuremath{K^{*}} }\mathrm{cf}^{V}\left(\eta\right)\ne\omega\right\}   & \delta_{M}=\omega
			\end{cases}
		\]
		Each of theses sets is in $C^{*}$, and by claims \ref{claim:characterize-regulars}
		and \ref{claim:regulars-agree} and lemma \ref{lem:fixed cof}, $\bar{E}_{\gamma}\con E_{\gamma}$
		for every $\gamma$. By our choice of $\lambda_{\gamma}$ and $\theta_{\gamma}$,
		they are cofinal in $\lambda_{\gamma}=\kappa_{\theta_{\gamma}}^{\gamma}$
		(by clause 3). Since $\left\langle \lambda_{\gamma}\mid\gamma<\chi\right\rangle \in C^{*}$,
		the sequence $\left\langle \bar{E}_{\gamma}\mid\gamma<\chi\right\rangle $
		is in $C^{*}$ as well.\qedhere
	\end{proof}
	Now as we claimed in the beginning of the proof, we can compute each
	ultrafilter $U_{\gamma}^{\chi}$ from the sequence $\bar{E}_{\gamma}$,
	hence in $C^{*}$ we can compute $M^{\chi}$. Thus by lemma \ref{lem:mice order}
	$M\in C^{*}$, by contradiction.\renewcommand{\qedsymbol}{$\square$\footnotesize{}{Theorem \ref{thm:mice in C*}}}
\end{proof}
We now move on to prove claims \ref{claim:characterize-regulars}
and \ref{claim:regulars-agree}.
\begin{proof}[Proof of claim \ref{claim:characterize-regulars}]
	By examining the proof of claim 2 in the proof of \cite[theorem 5.5]{IMEL},
	which corresponds to our claim, one can see that it can be stated
	as a general lemma:
	\begin{lem}[{in \cite[theorem 5.5]{IMEL}}]
		\label{lem:characterize-regulars}Assume that $\left\langle N_{\alpha},k_{\alpha,\beta}\mid\alpha\leq\beta<\theta\right\rangle $
		is an ultrapower iteration of~ $U_{0}\in N_{0}$ which is (in $N_{0}$)
		an ultrafilter on $\lambda_{0}$, i.e. for every $\alpha<\theta$,
		$N_{\alpha+1}=\ult\left(N_{\alpha},i_{0,\alpha}(U_{0})\right)$ and
		for limit $\alpha$ $N_{\alpha}$ is the directed limit. Let $\zeta_{0}=\left(\lambda_{0}^{+}\right)^{M_{0}}$(or
		$M_{0}\cap\ord$ if this doesn't exist) and $\delta=\mathrm{cf}^{V}(\zeta_{0})$.
		Set $\lambda_{\alpha}=i_{0,\alpha}(\lambda_{0})$ and assume that
		for every $\alpha<\theta$, $\left(\lambda_{\alpha}\right)^{\lambda_{\alpha}}\cap N_{\alpha}$
		is the increasing union of $\delta$ members of $N_{\alpha}$, each
		of cardinality $\lambda_{\alpha}$ in $N_{\alpha}$. Then for every
		$\beta<\theta$ and every $\eta$ such that $\lambda_{0}<\eta<\lambda_{\beta}$,
		if $N_{\beta}\vDash\eta$ is regular, then either $\exists\gamma<\beta$
		such that $\eta=\lambda_{\gamma}$ or $\mathrm{cf}^{V}(\eta)=\delta$.
	\end{lem}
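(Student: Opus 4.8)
The plan is to reduce the statement to a single ultrapower step and then read off the cofinality of $\eta$ directly from the decomposition hypothesis. First I would record the structural facts about a linear iteration by a normal measure: the iteration points $\lambda_{\gamma}=k_{0,\gamma}(\lambda_{0})$ are strictly increasing and continuous at limits (at a limit stage $\gamma$ every element of $N_{\gamma}$ is $k_{\gamma',\gamma}(x)$ for some $\gamma'<\gamma$, which forces $\lambda_{\gamma}=\sup_{\gamma'<\gamma}\lambda_{\gamma'}$), so $\{\lambda_{\gamma'}\mid\gamma'<\beta\}$ is closed below $\lambda_{\beta}$. Consequently, if the ordinal $\eta$, regular in $N_{\beta}$, is not one of the $\lambda_{\gamma'}$, then it cannot be a limit of iteration points, so it lies strictly between two consecutive ones: $\lambda_{\gamma}<\eta<\lambda_{\gamma+1}\le\lambda_{\beta}$ with $\gamma<\beta$. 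Since the tail embedding $k_{\gamma+1,\beta}\colon N_{\gamma+1}\to N_{\beta}$ has $\crit{k_{\gamma+1,\beta}}=\lambda_{\gamma+1}>\eta$, it fixes $\eta$ and reflects regularity, so $\eta$ is regular in $N_{\gamma+1}$ as well. This reduces everything to analysing the single ultrapower $N_{\gamma+1}=\ult(N_{\gamma},W)$, where $W=k_{0,\gamma}(U_{0})$ is a normal measure on $\lambda_{\gamma}$, and showing $\mathrm{cf}^{V}(\eta)=\delta$ for $\lambda_{\gamma}<\eta<k_{\gamma,\gamma+1}(\lambda_{\gamma})=\lambda_{\gamma+1}$.

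For the single-step analysis I would use the standard representation: since $W$ is normal, $\lambda_{\gamma}=[\mathrm{id}]_{W}$ and every ordinal below $\lambda_{\gamma+1}$ is $[g]_{W}$ for some $g\in({}^{\lambda_{\gamma}}\lambda_{\gamma})\cap N_{\gamma}$. Now I invoke the hypothesis at stage $\gamma$: write $({}^{\lambda_{\gamma}}\lambda_{\gamma})\cap N_{\gamma}=\bigcup_{i<\delta}A_{i}$ as an increasing union with each $A_{i}\in N_{\gamma}$ of $N_{\gamma}$-cardinality $\lambda_{\gamma}$. For each $i<\delta$ set $Y_{i}=\{[g]_{W}\mid g\in A_{i}\}$ and $\eta_{i}=\sup(Y_{i}\cap\eta)$. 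Every ordinal $<\eta$ is $[g]_{W}$ for some $g$ lying in some $A_{i}$, so $\eta=\sup_{i<\delta}\eta_{i}$, and as the $A_{i}$ increase the sequence $\langle\eta_{i}\mid i<\delta\rangle$ is non-decreasing and (being built from the $V$-objects $A_{i}$) lies in $V$. The whole point is then that each $\eta_{i}<\eta$, which yields a cofinal map of length $\delta$ into $\eta$ in $V$.

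The main obstacle, and the only place the cardinality clause of the hypothesis is used, is proving $\eta_{i}<\eta$, for which I must show that $Y_{i}$ lives inside $N_{\gamma+1}$ with small cardinality there. Fixing in $N_{\gamma}$ an enumeration $A_{i}=\{g_{\xi}\mid\xi<\lambda_{\gamma}\}$ and letting $G(\xi,\zeta)=g_{\xi}(\zeta)$, the computation $[g_{\xi}]_{W}=k_{\gamma,\gamma+1}(G)(\xi,\lambda_{\gamma})$ (using $k_{\gamma,\gamma+1}(\xi)=\xi$ for $\xi<\lambda_{\gamma}$ and $[f]_{W}=k_{\gamma,\gamma+1}(f)(\lambda_{\gamma})$) exhibits $\langle[g_{\xi}]_{W}\mid\xi<\lambda_{\gamma}\rangle$ as a sequence definable inside $N_{\gamma+1}$ from $k_{\gamma,\gamma+1}(G)$ and $\lambda_{\gamma}$; hence $Y_{i}\in N_{\gamma+1}$ with $N_{\gamma+1}$-cardinality at most $\lambda_{\gamma}$. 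As $\eta>\lambda_{\gamma}$ is regular in $N_{\gamma+1}$, the set $Y_{i}\cap\eta$ is bounded, so $\eta_{i}<\eta$. Finally, $\delta$ is regular, being the value of a cofinality, and a non-decreasing sequence of length $\delta$ cofinal in $\eta$ with all terms below $\eta$ forces $\mathrm{cf}^{V}(\eta)=\delta$: the upper bound is immediate, and for the lower bound a hypothetical cofinal map from some $\mu<\delta$ would have its bounding indices sup to some $i^{*}<\delta$ by regularity of $\delta$, capping its range below $\eta_{i^{*}}<\eta$, a contradiction. This completes the argument.
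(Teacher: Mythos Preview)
Your argument is correct. The paper itself does not give a proof of this lemma: it simply extracts the statement from the proof of \cite[theorem~5.6]{IMEL} (labelling it as ``in \cite[theorem~5.5]{IMEL}'') and then applies it. Your proof follows precisely the standard line of that argument --- reduce to a single ultrapower step using continuity of the iteration points and the fact that $\crit{k_{\gamma+1,\beta}}=\lambda_{\gamma+1}$, then use the decomposition $({}^{\lambda_\gamma}\lambda_\gamma)\cap N_\gamma=\bigcup_{i<\delta}A_i$ to produce a non-decreasing $\delta$-sequence $\langle\eta_i\rangle$ cofinal in $\eta$, with each $\eta_i<\eta$ because $Y_i\in N_{\gamma+1}$ has $N_{\gamma+1}$-cardinality $\le\lambda_\gamma$. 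Your coding of $Y_i$ inside $N_{\gamma+1}$ via $k_{\gamma,\gamma+1}(G)$ is exactly the right move, and your verification that $\mathrm{cf}^V(\eta)=\delta$ (not merely $\le\delta$) from the regularity of $\delta$ is a point worth making explicit, as the equality is what the lemma asserts.
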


	Recall that $\mathrm{cf}^{V}(\xi(\gamma))=\delta_{M}=\mathrm{cf}^{V}(\alpha_{M})$,
	so to use this lemma, we need to show that for every $\gamma$ and
	$\beta<\lambda_{\gamma}$, $\big(\kappa_{\beta}^{\gamma}\big)^{\kappa_{\beta}^{\gamma}}\cap M_{\beta}=\bigcup_{\psi<\delta_{M}}F_{\psi}$
	such that $F_{\psi}\in M_{\beta}^{\gamma}$ and $\left|F_{\psi}\right|=\kappa_{\beta}^{\gamma}$.
	Set for the moment $N=M_{\beta}^{\gamma}$, $\kappa=\kappa_{\beta}^{\gamma}$,
	$\xi=\xi_{\beta}^{\gamma}=\left(\kappa^{+}\right)^{N}$. Assume $N=J_{\alpha_{N}}\left[D,F\right]$
	and note that the image of the iteration embedding from $M$ to $N$
	is cofinal, so $\mathrm{cf}^{V}(\alpha_{N})=\mathrm{cf}^{V}(\alpha_{M})$.
	Since $N=\mathcal{H}_{1}^{N}(\rho_{N}\cup p_{N})$, every $h\in\kappa^{\kappa}\cap N$
	is in some $\mathcal{H}_{1}^{J_{\beta}\left[D,F\right]}(\rho_{N}\cup p_{N})$
	for some $\beta<\alpha_{N}$ (as in the proof of lemma \ref{lem:fixed cof}),
	and since $\rho_{M}<\kappa$, this is of size at most $\kappa$. So
	if we take $\left\langle \beta_{\psi}\mid\psi<\delta_{M}\right\rangle $
	cofinal in $\alpha_{N}$, then $F_{\psi}=\kappa^{\kappa}\cap\mathcal{H}_{1}^{J_{\beta_{\psi}}\left[D,F\right]}(\rho_{N}\cup p_{N})$
	is as required. So we can apply lemma \ref{lem:characterize-regulars}
	and get our claim. \renewcommand{\qedsymbol}{$\square$\footnotesize{}{Claim \ref{claim:characterize-regulars}}}
\end{proof}
As for claim \ref{claim:regulars-agree}, it follows by the choice
of $M$:
\begin{proof}[Proof of claim \ref{claim:regulars-agree}]
	Fix $\gamma<\chi$ and $\beta\leq\theta_{\gamma}$, and we need
	to show that $\eta\leq\kappa_{\beta}^{\gamma}$ is regular in $M_{\beta}^{\gamma}$
	iff it is regular in $K^{*}$. This will follow by showing that $\power(\kappa_{\beta}^{\gamma})\cap K^{*}=\power(\kappa_{\beta}^{\gamma})\cap M_{\beta}^{\gamma}$.

	If $X\in\power(\kappa_{\beta}^{\gamma})\cap K^{*}$, by theorem $\ref{thm:K(D)}$
	$X\in\mathrm{lp}(N)$ for some short $D$-mouse $N\in K^{*}$. If
	$M_{\beta}^{\gamma}\leq_{D}N\in K^{*}$ then in particular we'd get
	$M\in K^{*}$ by contradiction to the choice of $M$, so we must have
	$N<_{D}M_{\beta}^{\gamma}$. So there are iterates $J_{\delta}\left[D,F^{*}\right]$
	and $J_{\delta'}\left[D,F^{*}\right]$ of $N,M_{\beta}^{\gamma}$
	respectively, such that $\delta\leq\delta'$. $X\in\mathrm{lp}(N)$
	so $X$ is fixed in the iteration, hence $X\in J_{\delta}\left[D,F^{*}\right]\con J_{\delta'}\left[D,F^{*}\right]$.
	We can also assume that the iteration of $M_{\beta}^{\gamma}$ is
	done with measurables $\geq\kappa_{\beta}^{\gamma}$ so $\power(\kappa_{\beta}^{\gamma})\cap M_{\beta}^{\gamma}=\power(\kappa_{\beta}^{\gamma})\cap J_{\delta'}\left[D,F^{*}\right]$
	so $X\in M_{\beta}^{\gamma}$.

	If $X\in\power(\kappa_{\beta}^{\gamma})\cap M_{\beta}^{\gamma}$,
	$M_{\beta}^{\gamma}=J_{\tilde{\alpha}}[D,\tilde{F}]$, then there
	is some $\eta\in M_{\beta}^{\gamma}$ such that $X$ is definable
	in $J_{\eta}[D,\tilde{F}]$ from a finite set of parameters $p$,
	so $X\in\mathcal{H}_{1}^{M_{\beta}^{\gamma}}\left(\kappa_{\beta}^{\gamma}\cup p\right)$
	i.e. $X$ is in a mouse $N$ which is smaller than $M_{0}^{0}$, but
	$M_{0}^{0}$ is the minimal mouse not in $K^{*}$, so $N$ is in $K^{*}$,
	hence $X\in\power(\kappa_{\beta}^{\gamma})\cap K^{*}$. \renewcommand{\qedsymbol}{$\square$\footnotesize{}{Claim \ref{claim:regulars-agree}}}
\end{proof}

\section{\label{sec:The-C-star-of}The $C^{*}$ of a short $L\left[\mathcal{U}\right]$ }

Our next goal is to generalize theorem 5.16 of \cite{IMEL} which
states that if $V=L^{\mu}$ then $C^{*}=M_{\omega^{2}}\left[E\right]$
where $M_{\omega^{2}}$ is the $\omega^{2}$th iterate of $V$ and
$E=\left\{ \kappa_{\omega\cdot n}\mid1\leq n<\omega\right\} $ is
the sequence of the $\omega\cdot n$ iteration points. As noted in
\cite{IMEL}, this can be generalized using the same proof to a model
with finitely many measurable cardinals $\left\{ \kappa^{0}\til\kappa^{m}\right\} $,
using the Prikry sequences $E^{i}=\left\{ \kappa_{\omega\cdot n}^{i}\mid1\leq n<\omega\right\} $
for $i\leq m$. However, if we want infinitely many measurables, $\left\langle \kappa^{i}\mid i<\omega\right\rangle $,
we'd need to show that the \emph{sequence} $\left\langle E^{i}\mid i<\omega\right\rangle $
is in $C^{*}$, and not only every $E^{i}$, and at first sight it
is not clear that this is the case -- note for example that by \cite[theorem 5.10]{IMEL},
the sequence of measurables $\left\langle \kappa^{i}\mid i<\omega\right\rangle $
is necessarily \emph{not} in $C^{*}$. We will however show that using
the previous theorem, we can construct the sequence $\left\langle E^{i}\mid i<\omega\right\rangle $,
and even longer (though still short) sequences, in $C^{*}$.
\begin{thm}
	\label{thm:C-star-in-L(U)} Assume $V=L\left[\mathcal{U}\right]$
	where $\mathcal{U}=\left\langle U^{\gamma}\mid\gamma<\chi\right\rangle $
	is a short sequence of measures on the increasing measurables $\left\langle \kappa^{\gamma}\mid\gamma<\chi\right\rangle $
	($\chi<\kappa^{0}$). Iterate $V$ according to $\mathcal{U}$ where
	each measurable is iterated $\omega^{2}$ many times, to obtain $\left\langle M_{\alpha}^{\gamma},\mid\gamma<\chi,\alpha\leq\omega^{2}\right\rangle $,
	with iteration points $\left\langle \kappa_{\alpha}^{\gamma}\mid\gamma<\chi,\alpha\leq\omega^{2}\right\rangle $
	and set $M^{\chi}$ as the directed limit of this iteration. Then
	\[
		C^{*}=M^{\chi}\left[\left\langle E^{\gamma}\mid\gamma<\chi\right\rangle \right]
	\]
	where $E^{\gamma}=\left\langle \kappa_{\omega\cdot n}^{\gamma}\mid1\leq n<\omega\right\rangle $.
\end{thm}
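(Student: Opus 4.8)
The theorem asserts a two-way equality $C^* = M^\chi[\langle E^\gamma\mid\gamma<\chi\rangle]$, so I would prove each inclusion separately, treating this as the natural generalization of \cite[theorem 5.16]{IMEL} from one (or finitely many) measurable to a short sequence of length $\chi$. The key structural observation is that iterating each measurable $\omega^2$ times is exactly what is needed: the $\omega\cdot n$ iteration points $\kappa_{\omega\cdot n}^\gamma$ carry Prikry-like information, and the sequence $E^\gamma=\langle\kappa_{\omega\cdot n}^\gamma\mid 1\le n<\omega\rangle$ is the analogue of the single Prikry sequence in the $V=L^\mu$ case.

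\textbf{The inclusion $M^\chi[\langle E^\gamma\mid\gamma<\chi\rangle]\subseteq C^*$.}
First I would show $M^\chi\in C^*$. Since $V=L[\mathcal U]$, each ``slice'' $V=L[\mathcal U]$ is (an iterate of) a short $D$-mouse with $D=\varnothing$, so theorem \ref{thm:mice in C*} already places the relevant mice, hence $K[\varnothing]$ and its iterates, inside $C^*$; in particular $M^\chi$, being a definable iterate, should be computable in $C^*$ by the same machinery that produced claim \ref{claim:cofinal sequences}. The harder part is getting the \emph{whole sequence} $\langle E^\gamma\mid\gamma<\chi\rangle$ into $C^*$ --- this is precisely the subtlety the author flags in the preamble, since by \cite[theorem 5.10]{IMEL} the sequence $\langle\kappa^\gamma\mid\gamma<\chi\rangle$ itself is \emph{not} in $C^*$. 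My plan is to reuse the recursive construction of $\langle\lambda_\gamma\mid\gamma<\chi\rangle$ inside $C^*$ from the proof of theorem \ref{thm:mice in C*}, together with lemma \ref{lem:fixed-succ-cof} and claims \ref{claim:characterize-regulars}, \ref{claim:regulars-agree}: each $E^\gamma$ is recovered uniformly as the set of $\eta$ below the appropriate bound that are regular in $K^*=(K[\varnothing])^{C^*}$ and have $V$-cofinality matching $\delta_M$ (via the $Q^{\mathrm{cf}}_\omega$ quantifier), and because this description is uniform in $\gamma$ and the parameters $\langle\lambda_\gamma\rangle$ are defined recursively in $C^*$, the assembled sequence $\langle E^\gamma\mid\gamma<\chi\rangle$ lands in $C^*$. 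This gives $M^\chi[\langle E^\gamma\mid\gamma<\chi\rangle]\subseteq C^*$.

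\textbf{The inclusion $C^*\subseteq M^\chi[\langle E^\gamma\mid\gamma<\chi\rangle]$.}
For the reverse I would follow the $L^\mu$ strategy: show that $W:=M^\chi[\langle E^\gamma\mid\gamma<\chi\rangle]$ already computes the cofinality-$\omega$ quantifier correctly on all relevant structures, so that the $\LL(Q^{\mathrm{cf}}_\omega)$-construction carried out in $V$ never sees more than what $W$ can see. Concretely, I would argue that $W$ and $V$ agree on which ordinals have $V$-cofinality $\omega$ on a club of stages --- the $E^\gamma$ are exactly the Prikry sequences making the former measurables singular of cofinality $\omega$ in $W$, and below/between them $W$ and $V$ have the same cofinality structure because the iteration only moved the measurables. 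Since $C^*$ is built using only first-order truth plus the $\omega$-cofinality predicate, and $W\models\zfc$ contains enough of this predicate correctly (and $W\subseteq V$ with $C^*\subseteq V$), one shows by induction on the $\LL(Q^{\mathrm{cf}}_\omega)$-hierarchy that every level is in $W$, giving $C^*\subseteq W$.

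\textbf{Main obstacle.}
The delicate point is the first inclusion: verifying that the \emph{entire length-$\chi$ sequence} $\langle E^\gamma\mid\gamma<\chi\rangle$ is assembled inside $C^*$ rather than merely each $E^\gamma$ individually. This requires that the recursive definition of $\langle\lambda_\gamma\mid\gamma<\chi\rangle$ survives limit stages $\gamma$ (so that the ``next'' regular-cardinal threshold is genuinely $C^*$-definable from the initial segment), and that claims \ref{claim:characterize-regulars} and \ref{claim:regulars-agree} continue to identify iteration points correctly across all $\chi$ coordinates simultaneously. Shortness of $\mathcal U$ is essential here, since it guarantees $\otp\dom(F_\beta^\gamma)=\otp\dom(F)$ is preserved and the iteration does not ``stretch''; without it the uniform $C^*$-description of the $E^\gamma$ would break down.
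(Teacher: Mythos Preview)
Your two-inclusion skeleton and your treatment of the inclusion $C^{*}\subseteq M^{\chi}[\langle E^{\gamma}\rangle]$ match the paper's argument: one observes that the only ordinals with $V$-cofinality $\omega$ but not $M^{\chi}$-cofinality $\omega$ are those whose $M^{\chi}$-cofinality lies in some $E^{\gamma}\cup\{\sup E^{\gamma}\}$, so $M^{\chi}[\langle E^{\gamma}\rangle]$ can simulate the $Q^{\mathrm{cf}}_{\omega}$ quantifier.

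For the forward inclusion your instinct is right---a recursive definition of $\langle E^{\gamma}\mid\gamma<\chi\rangle$ inside $C^{*}$ is exactly the point---but the mechanism you propose is not the one the paper uses, and your sketch does not quite work as stated. You suggest reusing the $\langle\lambda_{\gamma}\rangle$ construction from theorem~\ref{thm:mice in C*} together with a \emph{fixed} core model $K^{*}=(K[\varnothing])^{C^{*}}$ and the parameter $\delta_{M}$. Two issues: first, here $V=L[\mathcal U]$ is a proper class, so there is no $\alpha_{M}$ and hence no $\delta_{M}$; the relevant fact is simply that each $(\kappa^{\gamma})^{+}$ is a genuine regular cardinal of $V$, so $\mathrm{cf}^{V}(\xi^{\gamma})>\omega$ automatically. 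Second, and more importantly, the $\lambda_{\gamma}$'s of theorem~\ref{thm:mice in C*} are large regulars chosen to serve as targets for iterating a \emph{set} mouse; they do not recover the specific $\omega^{2}$-iteration points $\kappa^{\gamma}_{\omega\cdot n}$ needed here.

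What the paper actually does at stage $\eta$ is pass to a \emph{stage-dependent} core model. From the inductively given $\langle E^{\gamma}\mid\gamma<\eta\rangle\in C^{*}$ one forms $D_{\eta}=\langle\bar U_{\gamma}\mid\gamma<\eta\rangle$ (the tail filters generated by the $E^{\gamma}$), applies theorem~\ref{thm:mice in C*} to get $K^{\eta}:=K[D_{\eta}]=(K[D_{\eta}])^{C^{*}}$, and then proves (claim~\ref{claim:equal-power-set}) that $\mathcal P(\kappa^{\eta}_{\alpha})\cap K^{\eta}=\mathcal P(\kappa^{\eta}_{\alpha})\cap N^{\eta}_{\alpha}$ where $N^{\eta}=L[D_{\eta},U_{\eta}]$. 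This comparison succeeds precisely because both sides are $D_{\eta}$-models and can be iterated to a common $J_{\cdot}[D_{\eta},\mathrm{CU}(\lambda)]$; with a fixed $K[\varnothing]$ the bases do not match and the coiteration argument is not available in this form. Once the power sets agree, $E^{\eta}$ is characterized in $C^{*}$ as the first $\omega$ ordinals above $\sup\{\sup E^{\gamma}\mid\gamma<\eta\}$ that are regular in $K^{\eta}$ but of $V$-cofinality $\omega$ (claim~\ref{claim:iteration-points}), and the recursion proceeds. Incorporating the already-built $\bar U_{\gamma}$'s into the simple predicate $D_{\eta}$ is the key idea your proposal is missing.
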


\begin{rem}
	This theorem appears already in a paper by Welch, \cite[Theorem 6.1]{welch2022closed},
	where the proof only states that since there are no measurable limits
	of measurables, ``those arguments in \cite{IMEL} can, with some
	additional work, be used to get the result''. However, as we noted
	above, dealing with each measurable separately is \emph{prima facie}
	not sufficient to show that the \emph{sequence} of Prikry sequences
	$\left\langle E^{\gamma}\mid\gamma<\chi\right\rangle $ is in $C^{*}$,
	which seems crucial for the argument. Hence we include a detailed
	proof which addresses this issue, using the methods developed above.
	In the forthcoming \cite{welch2022C*} Welch gives a detailed proof
	of a more general result (see below), based on the techniques and
	results of \cite{welch2022closed}, however our proof here was obtained
	independently and does not rely on these results, and is in some sense
	more elementary, as Welch's argument requires more advanced inner
	model theory.
\end{rem}

\begin{proof}
	First we prove that $\left\langle E^{\gamma}\mid\gamma<\chi\right\rangle \in C^{*}$
	by giving a recursive definition for the sequence inside $C^{*}$.
	Denote for every $\gamma$ the filter on $\kappa_{\omega^{2}}^{\gamma}$
	generated by end-segments of $E^{\gamma}$ by $\bar{U}^{\gamma}$
	(recall that this is also the image of $U^{\gamma}$ under the corresponding
	embedding). Assume we've shown that $\left\langle E^{\gamma}\mid\gamma<\eta\right\rangle \in C^{*}$
	for some $\eta<\chi$, so also $D^{\eta}:=\left\langle \bar{U}^{\gamma}\mid\gamma<\eta\right\rangle \in C^{*}$,
	hence by theorem \ref{thm:mice in C*} $K\left[D^{\eta}\right]=\left(K\left[D^{\eta}\right]\right)^{C^{*}}$.
	Denote $K^{\eta}:=K\left[D^{\eta}\right]$.
	\begin{claim}
		\label{claim:equal-power-set}For every $\alpha$ , $\power(\kappa_{\alpha}^{\eta})\cap K^{\eta}=\power(\kappa_{\alpha}^{\eta})\cap M_{\alpha}^{\eta}$.
		In particular, for every $\delta\leq\kappa_{\alpha}^{\eta}$ $K^{\eta}\vDash$``$\delta$
		is regular'' iff $M_{\alpha}^{\eta}\vDash$``$\delta$ is regular''.
	\end{claim}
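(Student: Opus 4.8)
The plan is to establish the two inclusions of the power-set equality separately, using comparison of short $D_\eta$-mice (lemma \ref{lem:mice order}) as the engine and exploiting that the unique measure of $N_\alpha^\eta$ sits precisely at $\kappa_\alpha^\eta$. The device I would use in both directions is to replace $N_\alpha^\eta$ by its next iterate $N_{\alpha+1}^\eta=\ult(N_\alpha^\eta,U_\alpha^\eta)$: since $\kappa_\alpha^\eta$ is the critical point of this single ultrapower, we have $\power(\kappa_\alpha^\eta)\cap N_{\alpha+1}^\eta=\power(\kappa_\alpha^\eta)\cap N_\alpha^\eta$, while the least measurable of $N_{\alpha+1}^\eta$ is now $\kappa_{\alpha+1}^\eta>\kappa_\alpha^\eta$. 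Consequently every subset of $\kappa_\alpha^\eta$ lying in $N_\alpha^\eta$ sits in the lower part $\mathbf{H}(\kappa_{\alpha+1}^\eta)^{N_{\alpha+1}^\eta}$, below the first measurable. This is uniform in $\alpha$, so no induction on $\alpha$ is needed.

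For the inclusion $\power(\kappa_\alpha^\eta)\cap N_\alpha^\eta\subseteq K^\eta$, I would take such an $X$, view it inside $N_{\alpha+1}^\eta$ as above, and note that any sufficiently tall initial segment of $N_{\alpha+1}^\eta$ is a short $D_\eta$-premouse carrying the single measurable $\kappa_{\alpha+1}^\eta$ (shortness is automatic, since it carries one measurable). Collapsing an appropriate $\Sigma_1$-hull yields a short $D_\eta$-mouse $M$ with $X\in\lp(M)$, so $X\in K^\eta$ by theorem \ref{thm:K(D)}.

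For the reverse inclusion, given $X\in\power(\kappa_\alpha^\eta)\cap K^\eta$, theorem \ref{thm:K(D)} supplies a short $D_\eta$-mouse $P$ with $X\in\lp(P)=\mathbf{H}(\min(\meas(P)))^P$; since $X\subseteq\kappa_\alpha^\eta$ this forces $\min(\meas(P))>\kappa_\alpha^\eta$. Let $Q$ be a tall initial segment of $N_{\alpha+1}^\eta$, a short $D_\eta$-mouse with $\min(\meas(Q))=\kappa_{\alpha+1}^\eta>\kappa_\alpha^\eta$. Coiterating $P$ and $Q$ produces iterates $J_{\delta_P}[D_\eta,F^*]$ and $J_{\delta_Q}[D_\eta,F^*]$ with a common $F^*$, one an initial segment of the other. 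Crucially, every ultrapower taken on either side has critical point $>\kappa_\alpha^\eta$, so $\power(\kappa_\alpha^\eta)$ is preserved on both sides and $X$ is fixed; moreover both iterates retain a measurable above $\kappa_\alpha^\eta$, hence extend past $(\kappa_\alpha^\eta)^+$, so the level at which $X$ appears lies below both $\delta_P$ and $\delta_Q$. Therefore $X\in J_{\min(\delta_P,\delta_Q)}[D_\eta,F^*]$, which is an initial segment of the $Q$-side iterate; preservation of $\power(\kappa_\alpha^\eta)$ on the $Q$-side then gives $X\in Q\subseteq N_{\alpha+1}^\eta$, and hence $X\in N_\alpha^\eta$. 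Note that this argument needs no control over which side of the comparison comes out shorter, precisely because both least measurables were pushed strictly above $\kappa_\alpha^\eta$.

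The ``in particular'' clause then follows at once: for $\delta\le\kappa_\alpha^\eta$, any witness to the singularity of $\delta$ is a cofinal map from a smaller ordinal, coded by a subset of $\kappa_\alpha^\eta$; as the two models have the same such subsets, they agree on whether $\delta$ is regular. I expect the main obstacle to be the bookkeeping that certifies the auxiliary structures as genuine short $D_\eta$-mice in the paper's sense --- namely the iterability of $N^\eta=L[D_\eta,U_\eta]$ and of all its iterates, and the soundness and $\Sigma_1$-hull condition needed before theorem \ref{thm:K(D)} and lemma \ref{lem:mice order} may be invoked --- together with the verification that the single ultrapower by $U_\alpha^\eta$ genuinely fixes $\power(\kappa_\alpha^\eta)$. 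These are routine for $L[\mathcal{U}]$-style models but must be checked in the $D_\eta$-relativized setting.
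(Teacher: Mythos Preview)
Your proposal is correct and follows the same underlying idea as the paper --- comparison of short $D_\eta$-mice --- but the implementation differs in two respects worth noting.

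First, the paper does not coiterate in the Mitchell sense. Instead it picks a single $D_\eta$-mouse $P$ (with one measure) whose lower part already contains all of $\power(\kappa_\alpha^\eta)\cap K^\eta$, and an initial segment $Q=J_\theta[D_\eta,U_\eta]$ of $N^\eta$ tall enough to capture $\power(\kappa_\alpha^\eta)\cap N_\alpha^\eta$. It then iterates each of $P$ and $Q$ linearly by its single measure $\lambda$ many times for a large regular $\lambda$. By Koepke's result both iterates are of the form $J_{*}[D_\eta,\mathrm{CU}(\lambda)]$, hence one is automatically an initial segment of the other; no comparison machinery is needed, and both inclusions fall out of the single equality $\power(\kappa_\alpha^\eta)\cap\bar P=\power(\kappa_\alpha^\eta)\cap\bar Q$. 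This is shorter and avoids invoking lemma~\ref{lem:mice order}.

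Second, your device of replacing $N_\alpha^\eta$ by its one-step ultrapower $N_{\alpha+1}^\eta$ so that the least measurable is strictly above $\kappa_\alpha^\eta$ is a clean way to guarantee that every ultrapower in the comparison has critical point $>\kappa_\alpha^\eta$; the paper instead works with $Q$ an initial segment of $N^\eta$ itself and relies on the fact that the $\lambda$-fold iterate of $Q$ has the same $\power(\kappa_\alpha^\eta)$ as $N_\alpha^\eta$ (the iterates of $Q$ pass through a model agreeing with $N_\alpha^\eta$ below its measurable). Your formulation makes this point more transparent. Either way the ``in particular'' clause follows exactly as you say.
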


	\begin{proof}
		Denote the embedding $j^{\eta}\circ i_{0,\alpha}^{\eta}:V\to M_{\alpha}^{\eta}$
		by $j$. So $M_{\alpha}^{\eta}=L\left[j(\mathcal{U})\right]$ and
		since $\chi<\min\mathcal{U}$, this is $L\left[\left\langle j(U^{\gamma})\mid\gamma<\chi\right\rangle \right]$.
		For $\gamma<\eta$, $j(U^{\gamma})=\bar{U}^{\gamma}$ and $j(U^{\eta})=U_{\alpha}^{\eta}$.
		We claim that for $\gamma>\eta$ $j(U^{\gamma})=U^{\gamma}\cap M_{\alpha}^{\eta}$.
		Fix $\gamma>\eta$. First, since we are only dealing here with iterations
		of length $\leq\omega^{2}$ for each measure, $j(\kappa^{\gamma})=\kappa^{\gamma}$.
		Every member of $M_{\alpha}^{\eta}$ is represented by some $f:\kappa_{i_{1}}\times\dots\times\kappa_{i_{n}}\to V$
		for some $\kappa_{i_{1}},\dots,\kappa_{i_{n}}$ which are among the
		iteration points of $j$. If $\left[f\right]\in j(U^{\gamma})$, then
		without loss of generality $f(\gamma_{1}\til\gamma_{n})\in U^{\gamma}$
		for all $\left(\gamma_{1}\til\gamma_{n}\right)\in\kappa_{i_{1}}\times\dots\times\kappa_{i_{n}}$.
		Let
		\[
			A=\bigcap\left\{ f(\gamma_{1}\til\gamma_{n})\mid\left(\gamma_{1}\til\gamma_{n}\right)\in\kappa_{i_{1}}\times\dots\times\kappa_{i_{n}}\right\} .
		\]
		$A\in U^{\gamma}$ since $\left|\kappa_{i_{1}}\times\dots\times\kappa_{i_{n}}\right|<\kappa^{\gamma}$,
		and we get that $j(A)\con\left[f\right]$. If $A\in U^{\gamma}$ then
		also $A'=\left\{ \xi\in A\mid\xi\,\text{is inaccessible}>\sup\left\{ \kappa^{\gamma}\mid\gamma<\eta\right\} \right\} \in U^{\gamma}$,
		but for every $\xi\in A'$ $\xi=j(\xi)\in j(A)$ so $A'\con j(A)$
		hence $j(A)\in U^{\gamma}$. So every member of $j(U^{\gamma})$ contains
		a member of $U^{\gamma}$, so this is exactly the ultrafilter generated
		by $U^{\gamma}\cap M_{\alpha}^{\eta}$. So to conclude, we have $M_{\alpha}^{\eta}=L\left[D^{\eta},U_{\alpha}^{\eta},\left\langle U^{\gamma}\mid\eta<\gamma<\chi\right\rangle \right]$.

		If $x\in\power(\kappa_{\alpha}^{\eta})\cap M_{\alpha}^{\eta}$, then
		$x\in J_{\theta}\left[D^{\eta},U_{\alpha}^{\eta},\left\langle U^{\gamma}\mid\eta<\gamma<\chi\right\rangle \right]$
		for some $\theta$, so taking some $\rho>\left\langle \sup2^{\kappa_{\omega^{2}}^{\gamma}}\mid\gamma<\eta\right\rangle $
		and $<\kappa^{\eta}$, we can find parameters $p$ such that $x\in\mathcal{H}_{1}^{M_{\alpha}^{\eta}}\left(\rho\cup p\right)$
		which is a $D^{\eta}$-mouse, hence $x\in K^{\eta}$.

		Let $x\in\power(\kappa_{\alpha}^{\eta})\cap K^{\eta}$ and let $P$
		be a $D^{\eta}$-mouse such that $x\in\lp(P)$, $P=J_{\mu}\left[D^{\eta},F\right]$
		where $\min F>\kappa_{\alpha}^{\eta}$. Note that the order type of
		the sequence $D^{\eta}\cup F$ is at most $\chi$, since a mouse with
		more than $\chi$ many measures will generate a sharp for a canonical
		model with $\chi$ many measures, hence it cannot exist in $L\left[\mathcal{U}\right]$.
		So we can assume (after some reindexing) that for some $\chi'\leq\chi$,
		$F=\left\langle F^{\gamma}\mid\eta\leq\gamma<\chi'\right\rangle $.
		Choose some increasing sequence of regular cardinals $S=\left\langle \lambda^{\gamma}\mid\eta\leq\gamma<\chi'\right\rangle $
		such that $\lambda^{\eta}$ is greater than the power set of each
		ordinal measured in $F$ or $\mathcal{U}$. Iterating $P$ and $M_{\alpha}^{\eta}$
		according to $S$, using the measures in $\left\langle F^{\gamma}\mid\eta\leq\gamma<\chi'\right\rangle $
		and $\left\langle U_{\alpha}^{\eta}\right\rangle ^{\frown}\left\langle U^{\gamma}\mid\eta<\gamma<\chi'\right\rangle $
		respectively, will result in models
		\begin{align*}
			\bar{P} & =J_{\theta}\left[D^{\eta},\mathrm{CU}\mets S\right]\text{ and }                                     \\
			\bar{M} & =L\left[D^{\eta},\mathrm{CU}\mets S,\langle\tilde{U}^{\gamma}\mid\chi'\leq\gamma<\chi\rangle\right]
		\end{align*}
		(c.f. lemma \ref{lem:comparison}, $\mathrm{CU}$ is the closed unbounded
		simple predicate, $\tilde{U}^{\gamma}$ the image of $U^{\gamma}$
		under the iteration) and in particular, $\bar{P}$ is an initial segment
		of $\bar{M}$.  The measures used in the iterations are on ordinals
		$\geq\kappa_{\alpha}^{\eta}$, so the power set of $\kappa_{\alpha}^{\eta}$
		is preserved throughout the iteration, i.e.
		\[
			\power(\kappa_{\alpha}^{\eta})\cap P=\power(\kappa_{\alpha}^{\eta})\cap\bar{P}\,\,\,\text{and}\,\,\,\power(\kappa_{\alpha}^{\eta})\cap M_{\alpha}^{\eta}=\power(\kappa_{\alpha}^{\eta})\cap\bar{M}.
		\]
		and in particular $x\in\bar{P}\cap\power(\kappa_{\alpha}^{\eta})\con\bar{M}\cap\power(\kappa_{\alpha}^{\eta})$,
		hence $x\in M_{\alpha}^{\eta}$.\renewcommand{\qedsymbol}{$\square$\footnotesize{}{Claim \ref{claim:equal-power-set}}}
	\end{proof}
	\begin{claim}
		\label{claim:small-regular}If $\delta<\kappa^{\eta}$ is such that
		$M_{0}^{\eta}\vDash\delta$ is regular, then $\delta$ is not of cofinality
		$\omega$ in $V$.
	\end{claim}

	\begin{proof}
		$\delta$ is represented in $M_{0}^{\eta}$ by some function $f:\kappa_{i_{1}}\times\dots\times\kappa_{i_{n}}\to V$
		for some $\kappa_{i_{1}},\dots,\kappa_{i_{n}}$ which are among the
		iteration points of $j^{\eta}$. Since $\delta$ is in $M_{0}^{\eta}$
		a regular cardinal $<\kappa^{\eta}$, we can assume that the range
		of $f$ consists of regular cardinals $<\kappa^{\eta}$. Let $E=\kappa_{i_{1}}\times\dots\times\kappa_{i_{n}}$
		and let $U^{E}\in V$ be the ultrafilter on $E$ given by the iteration,
		so that $\delta$ is represented by $\left[f\right]_{U^{E}}$. Let
		$\lambda_{*}$ be the supremum modulo $U^{E}$ of $\rng(f)$. If there
		is a $U^{E}$-large set on which $f$ is constant with value $\lambda$
		then we have $\delta=j^{\eta}(\lambda)$ and in particular is not
		of cofinality $\omega$ in $V$. Otherwise, we claim that $\mathrm{cf}^{V}(\delta)=\lambda_{*}^{+}$
		(and in particular $>\omega$). Using GCH in $V$, enumerate $\text{\ensuremath{\prod_{X\in E}f(X)}}$
		by $\left\langle g_{\alpha}\mid\alpha<\lambda_{*}^{+}\right\rangle $.
		Let $I$ be the ideal of bounded sets on $E$ ($A\in I$ if at each
		coordinate $k=1\til n$ $A$ is bounded below $\kappa_{i_{k}}$).
		We define by induction a sequence $\left\langle f_{\alpha}\mid\alpha<\lambda_{*}^{+}\right\rangle \in\text{\ensuremath{\prod_{X\in E}f(X)}}$,
		increasing modulo $I$ s.t. for every $\alpha$, $g_{\alpha}\leq_{I}f_{\alpha}$.
		Start with $f_{0}=g_{0}$ and at successor step take the pointwise
		maximum of $f_{\alpha}$ and $g_{\alpha+1}$. At limit stage $\alpha<\lambda_{*}^{+}$,
		since we assumed that $f$ is not constant on any large set, and all
		its values are regular cardinals, the set of $A_{\alpha}=\left\{ X\in E\mid\mathrm{cf}(f(X))=\mathrm{cf}(\alpha)\right\} $
		is bounded, and on every $X\in E\smin A_{\alpha}$, the sequence $\left\langle f_{\gamma}(X)\mid\gamma<\alpha\right\rangle $
		is bounded below $f(X)$ (by the mismatch in cofinality), so we can
		set $f_{\alpha}(X)=\max\left\{ \sup\left\langle f_{\gamma}(X)\mid\gamma<\alpha\right\rangle ,g_{\alpha}(X)\right\} $
		on every $X\in E\smin A_{\alpha}$. The sequence $\left\langle f_{\alpha}\mid\alpha<\lambda_{*}^{+}\right\rangle $
		is increasing modulo $I$, and hence also modulo $U^{E}$, and is
		cofinal in $\text{\ensuremath{\prod_{X\in E}f(X)}}$ modulo $I$,
		and hence $\left[f_{\alpha}\right]_{U^{E}}$ is a cofinal sequence
		in $\left[f\right]_{U^{E}}=\delta$ of order type $\lambda_{*}^{+}$,
		as required. \renewcommand{\qedsymbol}{$\square$\footnotesize{}{Claim \ref{claim:small-regular}}}
	\end{proof}
	\begin{claim}
		\label{claim:iteration-points}$E^{\eta}=\left\langle \kappa_{\omega\cdot n}^{\eta}\mid1\leq n<\omega\right\rangle $
		is the sequence of the first $\omega$ ordinals greater than $\Upsilon^{\eta}:=\sup\left\{ \sup E^{\gamma}\mid\gamma<\eta\right\} $
		which are regular in $K^{\eta}$ but of cofinality $\omega$ in $V$.
	\end{claim}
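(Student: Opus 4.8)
The plan is to prove the two inclusions separately, exploiting the iteration $\langle N_\alpha^\eta, i_{\alpha,\beta}\rangle$ of $N^\eta=L[D_\eta,U_\eta]$ by $U_\eta$ together with the regularity transfer provided by Claim \ref{claim:equal-power-set}. Write $b=\sup\{\sup E^\gamma\mid\gamma<\eta\}$, let $\xi_\alpha^\eta=\bigl((\kappa_\alpha^\eta)^+\bigr)^{N_\alpha^\eta}$, and set $\delta=\cof{\xi_0^\eta}$; the whole argument hinges on $\delta$ being uncountable, which I address last.

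First I would check that every member of $E^\eta$ has the stated properties. Each $\kappa_{\omega\cdot n}^\eta$ is the image of the measurable $\kappa^\eta$ under an iteration embedding, hence is regular in $N_{\omega\cdot n}^\eta$, so by Claim \ref{claim:equal-power-set} it is regular in $K^\eta$. Since iteration points are continuous at limit stages, $\kappa_{\omega\cdot n}^\eta=\sup_{\beta<\omega\cdot n}\kappa_\beta^\eta$, and $\cof{\omega\cdot n}=\omega$ gives $\cof{\kappa_{\omega\cdot n}^\eta}=\omega$. Finally $\kappa_{\omega\cdot n}^\eta\ge\kappa_0^\eta=\kappa^\eta>b$, since shortness and monotonicity of $\mathcal{U}$ give $\sup E^\gamma=\kappa_{\omega^2}^\gamma<\kappa^{\gamma+1}\le\kappa^\eta$ for $\gamma<\eta$, and $\kappa^\eta$, being measurable, is not a limit of the smaller measurables.

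For the converse, suppose $b<\theta\le\kappa_{\omega^2}^\eta$ is regular in $K^\eta$ with $\cof{\theta}=\omega$; I must identify $\theta$. By Claim \ref{claim:equal-power-set}, $\theta$ is regular in $N_{\omega^2}^\eta$. If $\theta>\kappa_0^\eta$ I apply Lemma \ref{lem:characterize-regulars} to the iteration of $U_\eta$; its hull-covering hypothesis is verified exactly as in the proof of Claim \ref{claim:characterize-regulars}, writing $(\kappa_\alpha^\eta)^{\kappa_\alpha^\eta}\cap N_\alpha^\eta$ as an increasing union of $\delta$ sets of size $\kappa_\alpha^\eta$ along a cofinal sequence in $\xi_\alpha^\eta$ (recall $\cof{\xi_\alpha^\eta}=\delta$, as in Lemma \ref{lem:fixed-succ-cof}). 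The lemma gives that $\theta$ is an iteration point or $\cof{\theta}=\delta$; as $\delta$ is uncountable the latter is excluded, so $\theta=\kappa_\alpha^\eta$ for some $\alpha$. The remaining strip $b<\theta\le\kappa_0^\eta$ contributes nothing: $\kappa_0^\eta=\kappa^\eta$ has $V$-cofinality $\kappa^\eta>\omega$, while for $b<\theta<\kappa^\eta$ Claim \ref{claim:equal-power-set} (with $\alpha=0$) reduces regularity in $K^\eta$ to regularity in $N^\eta$, hence to regularity in $L[D_\eta]$ below the measurable, and since $D_\eta$ carries no measure above $b=\sup\dom(D_\eta)$, covering for $K[D_\eta]$ makes any such $\theta$ regular in $V$ and so of uncountable cofinality.

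It then remains to read off the cofinalities of the iteration points and to settle $\delta$. For limit $\alpha$, continuity gives $\cof{\kappa_\alpha^\eta}=\cof{\alpha}$, which equals $\omega$ exactly when $\alpha=\omega\cdot n$ for some $1\le n\le\omega$. For successor $\alpha=\beta+1$, the single ultrapower $i_{\beta,\beta+1}$ sends $\kappa_\beta^\eta$ to an ordinal of $N_\beta^\eta$-cardinality $(\kappa_\beta^\eta)^+$ with $\mathrm{cf}^{N_\beta^\eta}(\kappa_{\beta+1}^\eta)=\xi_\beta^\eta$ (using GCH in $N_\beta^\eta$), so $\cof{\kappa_{\beta+1}^\eta}=\cof{\xi_\beta^\eta}=\delta$ is again uncountable. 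Thus the iteration points of $V$-cofinality $\omega$ below $\kappa_{\omega^2}^\eta$ are precisely $\kappa_{\omega\cdot n}^\eta$ for $1\le n<\omega$ (their supremum being $\kappa_{\omega^2}^\eta$, which is not among the first $\omega$), and listing them in increasing order produces $E^\eta$. The main obstacle, to which I would devote the most care, is the uncountability of $\delta$ together with the successor computation $\cof{\kappa_{\beta+1}^\eta}=\delta$: both reduce to showing that $N^\eta=L[D_\eta,U_\eta]$ computes $(\kappa^\eta)^+$ and its $V$-cofinality correctly above $\sup\dom(D_\eta)$; since $\mathcal{U}$ has no measure in $(b,\kappa^\eta)$ one argues $(\kappa^\eta)^{+N^\eta}=(\kappa^\eta)^{+V}$, whence $\delta=(\kappa^\eta)^{+V}>\omega$, and it is exactly this uncountable cofinality of the successor points that prevents spurious cofinality-$\omega$ regulars of $K^\eta$ from interleaving with the $\kappa_{\omega\cdot n}^\eta$.
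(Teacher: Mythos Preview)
Your overall architecture matches the paper's: show each $\kappa_{\omega\cdot n}^\eta$ is $K^\eta$-regular of $V$-cofinality $\omega$ above $b$, then eliminate all other candidates by splitting into the strip $(b,\kappa^\eta)$ and the intervals between iteration points, using Lemma~\ref{lem:characterize-regulars} for the latter. The computation of $\delta=(\kappa^\eta)^{+V}$ and of $\cof{\kappa_{\beta+1}^\eta}$ is also what the paper uses (there stated simply as ``$\mathrm{cf}^V(\eta)=(\kappa^\eta)^+$'').

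There is, however, a genuine gap in your treatment of the strip $b<\theta<\kappa^\eta$. You invoke ``covering for $K[D_\eta]$'' to conclude that a $K^\eta$-regular $\theta$ in this interval is regular in $V$. But Koepke's covering lemma for $K[D_\eta]$ requires that there be no inner model with a measurable cardinal above $\sup\dom(D_\eta)=b$, and here $V=L[\mathcal U]$ certainly has such inner models (indeed $\kappa^\eta,\kappa^{\eta+1},\dots$ are all measurable in $V$). So the covering hypothesis fails outright, and you cannot appeal to it. The paper avoids this by using the hypothesis $V=L[\mathcal U]$ directly: since $N^\eta=L[D_\eta,U_\eta]$ and $V=L[\mathcal U]$ are both $L$-like models over sequences of measures, one compares them (equivalently, observes that $N^\eta$ agrees below $\kappa^{\eta+1}$ with the iterate of $L[\mathcal U\mets(\eta+1)]$ obtained by iterating each $U_\gamma$, $\gamma<\eta$, $\omega^2$ times). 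From this one reads off that any $N^\eta$-regular $\theta\in(b,\kappa^\eta)$ has uncountable $V$-cofinality. Your route through covering cannot be repaired without essentially reproving this comparison; the cleanest fix is to drop the covering appeal and use $V=L[\mathcal U]$ as the paper does.
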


	\begin{proof}
		For every $\eta$ and $\alpha$, $\kappa_{\alpha}^{\eta}$ is regular
		in $M_{\alpha}^{\eta}$, so by claim \ref{claim:equal-power-set}
		it is regular in $K^{\eta}$ as well, and for every $n\geq1$ $\kappa_{\omega\cdot n}^{\eta}=\sup\left\{ \kappa_{\omega(n-1)+k}^{\eta}\mid k<\omega\right\} $
		so is of cofinality $\omega$ in $V$.

		Now note that for every $\gamma<\eta$, $\sup E^{\gamma}=\kappa_{\omega^{2}}^{\gamma}<\kappa^{\eta}$
		(by the properties of iterations), so also $\Upsilon^{\eta}=\sup\left\{ \sup E^{\gamma}\mid\gamma<\eta\right\} <\kappa^{\eta}=\kappa_{0}^{\eta}$.
		Hence by claim \ref{claim:equal-power-set} every $\delta\in\left(\Upsilon^{\eta},\kappa^{\eta}\right)$
		is regular in $K^{\eta}$ iff it is regular in $M_{0}^{\eta}$, and
		by claim \ref{claim:small-regular} $\delta$ is not of cofinality
		$\omega$ in $V$. So in the interval $\left(\Upsilon^{\eta},\kappa^{\eta}\right]$
		there are no ordinals which are regular in $K^{\eta}$ but of cofinality
		$\omega$ in $V$. For $\delta\in\big(\kappa_{\omega\cdot n}^{\eta},\kappa_{\omega\cdot\left(n+1\right)}^{\eta}\big)$
		($n\geq0$), by claim \ref{claim:equal-power-set} we get that $\delta$
		is regular in $K^{\eta}$ iff it is regular in $M_{\omega\cdot(n+1)}^{\eta}$,
		in which case we get by the same proof as that of claim \ref{claim:characterize-regulars}
		that either $\delta=\kappa_{\omega\cdot n+k}^{\eta}$ for some $k>0$,
		or $\mathrm{cf}^{V}(\delta)=\left(\kappa^{\eta}\right)^{+}$, and
		in either case $\delta$ is not of cofinality $\omega$ in $V$.

		Combining these three observations we get that indeed the first $\omega$
		ordinals $>\Upsilon^{\eta}$ which are regular in $K^{\eta}$ but
		of cofinality $\omega$ in $V$ are $\left\langle \kappa_{\omega\cdot n}^{\eta}\mid1\leq n<\omega\right\rangle .$\renewcommand{\qedsymbol}{$\square$\footnotesize{}{Claim \ref{claim:iteration-points}}}
	\end{proof}
	This claim allows us to define $E^{\eta}$ in $C^{*}$ using the sequence
	$\left\langle E^{\gamma}\mid\gamma<\eta\right\rangle $. Hence the
	sequence $\left\langle E^{\gamma}\mid\gamma<\chi\right\rangle $ can
	be defined recursively inside $C^{*}$: if $\left\langle E^{\gamma}\mid\gamma<\eta\right\rangle \in C^{*}$
	then also $\left\langle E^{\gamma}\mid\gamma\leq\eta\right\rangle \in C^{*}$,
	and if $\eta\leq\chi$ is limit, and $\left\langle E^{\gamma}\mid\gamma<\delta\right\rangle $
	is defined recursively in $C^{*}$ for every $\delta<\eta$, then
	the recursion actually implies that $\left\langle \left\langle E^{\gamma}\mid\gamma<\delta\right\rangle \mid\delta<\eta\right\rangle \in C^{*}$,
	so also $\left\langle E^{\gamma}\mid\gamma<\eta\right\rangle \in C^{*}$.
	In particular, $\left\langle E^{\gamma}\mid\gamma<\chi\right\rangle \in C^{*}$.
	Now we claim that indeed $C^{*}=M^{\chi}\left[\left\langle E^{\gamma}\mid\gamma<\chi\right\rangle \right]$.
	First of all, if we again denote by $\bar{U}^{\gamma}$ the filter
	on $\kappa_{\omega^{2}}^{\gamma}$ generated by end-segments of $E^{\gamma}$,
	then $\left\langle \bar{U}^{\gamma}\mid\gamma<\chi\right\rangle \in C^{*}$
	and $M^{\chi}=L\left[\left\langle \bar{U}^{\gamma}\mid\gamma<\chi\right\rangle \right]$,
	so $M^{\chi}\left[\left\langle E^{\gamma}\mid\gamma<\chi\right\rangle \right]\con C^{*}$.
	On the other hand, the only ordinals which are of cofinality $\omega$
	in $V$ but not in $M^{\chi}$ are those whose cofinality in $M^{\chi}$
	is in $E^{\gamma}\cup\left\{ \sup E^{\gamma}\right\} $ for some $\gamma<\chi$,
	so in $M^{\chi}\left[\left\langle E^{\gamma}\mid\gamma<\chi\right\rangle \right]$
	we can correctly compute $C^{*}$ by replacing the quantifier $Q_{\omega}^{\mathrm{cf}}(\alpha)$
	with ``$\mathrm{cf}(\alpha)\in\left\{ \omega\right\} \cup\bigcup\left\{ E^{\gamma}\mid\gamma<\chi\right\} \cup\left\{ \sup E^{\gamma}\mid\gamma<\chi\right\} $'',
	so $C^{*}\con M^{\chi}\left[\left\langle E^{\gamma}\mid\gamma<\chi\right\rangle \right]$
	as required.\renewcommand{\qedsymbol}{$\square$\footnotesize{}{Theorem \ref{thm:C-star-in-L(U)}}}
\end{proof}
We now generalize the above theorem by calculating $C^{*}$ in an
extension of $L\left[\mathcal{U}\right]$ obtained by Prikry forcing
for some of the measurables of $\mathcal{U}$.
\begin{thm}
	\label{thm:C-star-in-Prikry-ext}Assume $V=L\left[\mathcal{U}\right]$
	where $\mathcal{U}$ is as above, $M^{\chi}$ and $E^{\gamma}$ as
	above . Let $A\in V$, $A\con\chi$, and set for $\gamma<\chi$
	\[
		\hat{E}^{\gamma}=\begin{cases}
			\left\langle \kappa^{\gamma}\right\rangle ^{\frown}E^{\gamma} & \gamma\in A    \\
			E^{\gamma}                                                    & \gamma\notin A
		\end{cases}=\begin{cases}
			\left\langle \kappa_{\omega\cdot n}^{\gamma}\mid0\leq n<\omega\right\rangle & \gamma\in A    \\
			\left\langle \kappa_{\omega\cdot n}^{\gamma}\mid1\leq n<\omega\right\rangle & \gamma\notin A
		\end{cases}
	\]
	and let $G$ be generic for the forcing over $V$ adding a Prikry
	sequence $G_{\alpha}\con\kappa^{\alpha}$ for every $\alpha\in A$.
	Then $\left(C^{*}\right)^{V\left[G\right]}=M^{\chi}\left[\left\langle \hat{E}^{\gamma}\mid\gamma<\chi\right\rangle \right].$
\end{thm}

\begin{rem}
	The difference between this case and the previous one is that by
	adding a Prikry sequence to $\kappa^{\gamma}$, it becomes of cofinality
	$\omega$, and in fact $C^{*}$ might not be able to distinguish it
	from the iteration points $\kappa_{\omega\cdot n}$ for $n\geq1$.
	The theorem does not imply that $A\in\left(C^{*}\right)^{V\left[G\right]}$
	-- it will be clear from the proof that $\left(C^{*}\right)^{V\left[G\right]}$
	might not distinguish which option was used in defining $\hat{E}^{\gamma}$,
	and as we noted earlier, $C^{*}$ doesn't contain any infinite sequence
	of the measurables, so it can't distinguish the ones that were collapsed
	from the ones that weren't.
\end{rem}

\begin{cor}
	If $A$ is finite then $\left(C^{*}\right)^{V\left[G\right]}=\left(C^{*}\right)^{V}$.
	If $A\in\left(C^{*}\right)^{V\left[G\right]}$ then $\left\langle \kappa^{\alpha}\mid\alpha\in A\right\rangle \in\left(C^{*}\right)^{V\left[G\right]}$
	and in particular, if $A\in\left(C^{*}\right)^{V\left[G\right]}$
	is infinite then $\left(C^{*}\right)^{V\left[G\right]}\ne\left(C^{*}\right)^{V}$.
\end{cor}

\begin{rem}
	In our paper \cite{yaar-iterating-Cstar} we proved this in the special
	case of a Prikry extension of $L^{\mu}$.
\end{rem}

\begin{proof}[Proof of Corollary]
	If $A$ is finite then $A\in\left(C^{*}\right)^{V}$ and the sequence
	$\langle\hat{E}^{\gamma}\mid\gamma<\chi\rangle$ can be computed from
	$A$ and $\left\langle E^{\gamma}\mid\gamma<\chi\right\rangle $ (recall
	$\left(C^{*}\right)^{V}=M^{\chi}\big[\langle E^{\gamma}\mid\gamma<\chi\rangle\big]$).
	If $A\in\left(C^{*}\right)^{V\left[G\right]}$ then $\left\langle \kappa_{\alpha}\mid\alpha\in A\right\rangle $
	can be computed from $\langle\hat{E}^{\gamma}\mid\gamma<\chi\rangle$
	so is in $\left(C^{*}\right)^{V\left[G\right]}$. As we noted, if
	$A$ is infinite then by \cite[theorem 5.10]{IMEL}, $\left\langle \kappa^{\alpha}\mid\alpha\in A\right\rangle \notin\left(C^{*}\right)^{V}$
	.
\end{proof}
\begin{proof}[Proof of Theorem \ref{thm:C-star-in-Prikry-ext}]
	We repeat the proof of theorem \ref{thm:C-star-in-L(U)}. For the
	$\supseteq$ direction, we note that the only difference is that in
	claim \ref{claim:iteration-points}, for $\eta\in A$, the sequence
	of the first $\omega$ ordinals greater than $\sup\left\{ \sup E^{\gamma}\mid\gamma<\eta\right\} $
	which are regular in $K^{\eta}$ but of cofinality $\omega$ in $V\left[G\right]$
	is now the sequence $\langle\kappa_{\omega\cdot n}^{\eta}\mid0\leq n<\omega\rangle$
	instead of $\langle\kappa_{\omega\cdot n}^{\gamma}\mid1\leq n<\omega\rangle$,
	since also $\kappa^{\eta}$ has cofinality $\omega$ in $V\left[G\right]$,
	but otherwise there is no difference in cofinality $\omega$ between
	$V$ and $V\left[G\right]$ in this interval of ordinals.

	For the $\con$ direction, first note that by the properties of Prikry
	forcing, $V\left[G\right]\vDash\mathrm{cf}(\eta)=\omega$ iff $V\vDash\mathrm{cf}(\eta)\in\left\{ \omega\right\} \cup\left\{ \kappa^{\alpha}\mid\alpha\in A\right\} $.
	We claim that this is iff
	\begin{align*}
		(*)\quad & M^{\chi}\left[\left\langle \hat{E}^{\gamma}\mid\gamma<\chi\right\rangle \right]\vDash                                                                                \\
		         & \quad\mathrm{cf}(\eta)\in\left\{ \omega\right\} \cup\bigcup\left\{ \hat{E}^{\gamma}\mid\gamma<\chi\right\} \cup\left\{ \sup\hat{E}^{\gamma}\mid\gamma<\chi\right\} .
	\end{align*}
	The direction $\impliedby$ is clear by the definitions of the sets
	$\hat{E}^{\gamma}$. As before, $V\vDash\mathrm{cf}(\eta)\in\left\{ \omega\right\} $
	implies $(*)$. By claim \ref{lem:k-cof-preserves} below, $V\vDash\mathrm{cf}(\eta)=\kappa^{\alpha}$
	implies $M^{\chi}\vDash\mathrm{cf}(\eta)=\kappa^{\alpha}$, which
	implies that
	\[
		M^{\chi}\left[\left\langle \hat{E}^{\gamma}\mid\gamma<\chi\right\rangle \right]\vDash\mathrm{cf}(\eta)=\kappa^{\alpha}
	\]
	as each sequence $\hat{E}^{\gamma}$ is a Prikry sequence on $\kappa_{\omega^{2}}^{\gamma}$
	over $M^{\chi}$, so it preserves the regularity of $\kappa^{\alpha}$.
	So $V\vDash\mathrm{cf}(\eta)\in\left\{ \omega\right\} \cup\left\{ \kappa^{\alpha}\mid\alpha\in A\right\} $
	indeed implies $(*)$, so all-in-all $V\left[G\right]\vDash\mathrm{cf}(\eta)=\omega$
	is equivalent to $\left(*\right)$, hence in $M^{\chi}\left[\left\langle \hat{E}^{\gamma}\mid\gamma<\chi\right\rangle \right]$
	we can calculate $\left(C^{*}\right)^{V\left[G\right]}$.\renewcommand{\qedsymbol}{$\square$\footnotesize{}{Theorem \ref{thm:C-star-in-Prikry-ext}}}
\end{proof}
\begin{claim}
	\label{lem:k-cof-preserves}For any $\alpha,\beta<\chi$, $\gamma\leq\omega^{2}$
	and $\eta\in\ord$, $M_{\gamma}^{\beta}\vDash\cof{\eta}=\kappa^{\alpha}$
	iff $V\vDash\cof{\eta}=\kappa^{\alpha}$.
\end{claim}

\begin{proof}
	\renewcommand{\qedsymbol}{$\square$\footnotesize{}{Claim \ref{lem:k-cof-preserves}}}$\kappa^{\alpha}$
	is regular in $V$, thus it is regular in every $M_{\gamma}^{\beta}$
	as it is an inner model of $V$. If $M_{\gamma}^{\beta}\vDash\cof{\eta}=\kappa^{\alpha}$,
	then there is a cofinal $\kappa^{\alpha}$-sequence in $\eta$ (in
	both $M_{\gamma}^{\beta}$ and $V$), and since $\kappa^{\alpha}$
	is regular we get $V\vDash\cof{\eta}=\kappa^{\alpha}$.

	Now assume $V\vDash\cof{\eta}=\kappa^{\alpha}$. The same argument
	rules out $M_{\gamma}^{\beta}\vDash\cof{\eta}<\kappa^{\alpha}$ so
	we need to rule out the case $M_{\gamma}^{\beta}\vDash\cof{\eta}>\kappa^{\alpha}$.
	If this were thee case, and $M_{\gamma}^{\beta}\vDash\lambda=\cof{\eta}$,
	then the sequence witnessing this is also in $V$, so the cofinality
	of $\lambda$ in $V$ must be $\kappa^{\alpha}$ as well.  So we
	focus on showing there is \textbf{no} $\lambda>\kappa^{\alpha}$ such
	that $V\vDash\cof{\lambda}=\kappa^{\alpha}$ while $M_{\gamma}^{\beta}\vDash\lambda$
	is regular. We prove this by induction on $\beta$ and $\gamma$.
	Consider a few cases.
	\begin{casenv}
		\item $\beta<\alpha$ or $\left(\beta,\gamma\right)=\left(\alpha,0\right)$.
		In both these cases $M_{\gamma}^{\beta}$ is an iteration of length
		$<\kappa^{\alpha}$ using measurables $<\kappa^{\alpha}$. As before,
		any $\lambda>\kappa^{\alpha}$ is represented by some $f:E=\kappa_{i_{1}}\times\dots\times\kappa_{i_{n}}\to V$
		for some iteration points $\kappa_{i_{1}},\dots,\kappa_{i_{n}}$,
		and by our construction (iterating measurables $<\kappa^{\alpha}$
		at most $\omega^{2}$ many times) they are all $<\kappa^{\alpha}$.
		Also note that $\kappa^{\alpha}=j_{\gamma}^{\beta}(\kappa^{\alpha})$
		by strong inaccessibility so $\lambda>\kappa^{\alpha}=j_{\gamma}^{\beta}(\kappa^{\alpha})$.
		If $M_{\gamma}^{\beta}\vDash\lambda$ is regular, we can assume that
		for every $\bar{\zeta}\in E$, $f\left(\bar{\zeta}\right)$ is a regular
		cardinal $>\kappa^{\alpha}$. If $V\vDash\cof{\lambda}=\kappa^{\alpha}$
		then we can find in $V$ a sequence $\left\langle \gamma_{\xi}\mid\xi<\kappa^{\alpha}\right\rangle $
		cofinal in $\lambda$ and a sequence of functions $\left\langle g_{\xi}\mid\xi<\kappa^{\alpha}\right\rangle $
		such that $g_{\xi}$ represents $\gamma_{\xi}$ in $M_{\gamma}^{\beta}$.
		For every $\xi<\kappa^{\alpha}$ we have $\left[g_{\xi}\right]<\left[f\right]$.
		For any $\bar{\zeta}\in E$, since $f\left(\bar{\zeta}\right)$ is
		regular $>\kappa^{\alpha}$, the set $\left\{ g_{\xi}(\bar{\zeta})\mid\xi<\kappa^{\alpha}\right\} \cap f(\bar{\zeta})$
		is bounded by some $g(\bar{\zeta})<f\left(\bar{\zeta}\right)$. The
		function $g$ represents in $M_{\gamma}^{\beta}$ an ordinal $<\lambda$,
		and we claim it bounds the sequence $\left\langle \gamma_{\xi}\mid\xi<\kappa^{\alpha}\right\rangle $.
		For every $\xi$, there is a large set (with respect to the ultrafilter
		$U^{E}$) where $g_{\xi}(\bar{\ensuremath{\zeta}})<f\left(\bar{\zeta}\right)$.
		But for any such $\bar{\zeta}$, $g_{\xi}\left(\bar{\zeta}\right)<g(\bar{\zeta})$,
		so indeed $\left[g_{\xi}\right]<\left[g\right]$. This contradicts
		the assumption that $\left\langle \gamma_{\xi}\mid\xi<\kappa^{\alpha}\right\rangle $
		is cofinal in $\lambda$.
		\item $\beta\geq\alpha$, $\gamma=\gamma'+1$, the statement is true for
		$M_{\gamma'}^{\beta}$, and we prove for $M_{\gamma}^{\beta}$. In
		this case $M_{\gamma}^{\beta}$ is the ultrapower of $M_{\gamma'}^{\beta}$
		with a measure on an ordinal $\kappa'\geq\kappa^{\alpha}$, so in
		particular $M_{\gamma}^{\beta}$ is closed under $\kappa^{\alpha}$
		sequences in $M_{\gamma'}^{\beta}$. So for any $\lambda>\kappa^{\alpha}$,
		if $V\vDash\cof{\lambda}=\kappa^{\alpha}$ then by induction hypothesis
		also $M_{\gamma'}^{\beta}\vDash\cof{\lambda}=\kappa^{\alpha}$ so
		also $M_{\gamma}^{\beta}\vDash\cof{\lambda}=\kappa^{\alpha}$, so
		in particular $\lambda$ is not regular in $M_{\gamma}^{\beta}$.
		\item $\left(\beta,\gamma\right)>_{\lex}\left(\alpha,0\right)$, $\gamma$
		is limit or $0$, and the statement is true for any $M_{\gamma'}^{\beta'}$
		such that $\beta'<\beta$ and $\gamma'\leq\omega^{2}$ or $\beta'=\beta$
		and $\gamma'<\gamma$. In this case $M_{\gamma}^{\beta}$ is the directed
		limit of the previous stages. Assume $V\vDash\cof{\lambda}=\kappa^{\alpha}$
		and let $\left\langle \alpha_{\xi}\mid\xi<\kappa^{\alpha}\right\rangle $
		be a cofinal sequence in $\lambda$. Let's re-index the iteration
		leading to $M_{\gamma}^{\beta}$ as $\left\langle N_{\zeta},j_{\zeta,\zeta'}\mid\zeta\leq\zeta'\leq\bar{\chi}\right\rangle $
		($\bar{\chi}=\beta+\gamma\leq\chi$) so $M_{\gamma}^{\beta}=N_{\bar{\chi}}$.
		By definition of the limit ultrapower, each $\alpha_{\xi}$ is of
		the form $j_{\zeta,\bar{\chi}}(\bar{\alpha}_{\xi})$ for some $\zeta<\bar{\chi}$
		and $\bar{\alpha}_{\xi}\leq\alpha_{\xi}$. We can also assume that
		each such $\zeta$ is large enough so that $\lambda\in\rng(j_{\zeta,\bar{\chi}})$.
		Since $\bar{\chi}<\kappa^{\alpha}$, there is some $\zeta$ fitting
		$\kappa^{\alpha}$ many $\alpha_{\xi}$s, so without loss of generality
		we can assume there is $\zeta$ that fits all of them. By our assumptions
		on $\gamma$ and $\beta$, we can also assume that $\zeta$ is large
		enough so that if $N_{\zeta}=M_{\gamma'}^{\beta'}$ then $\left(\beta',\gamma'\right)>_{\lex}\left(\alpha,0\right)$,
		so $\kappa^{\alpha}$ is a fixed point of $j_{\zeta,\bar{\chi}}$.
		If $\bar{\lambda}=\sup\left\{ \bar{\alpha}_{\eta}\mid\eta<\kappa\right\} $\footnote{Note that we can't assume this sequence is in $M_{\gamma}^{\beta}$}
		then, since $\lambda=\sup\left\{ j_{\zeta,\bar{\chi}}(\bar{\alpha}_{\xi})\mid\xi<\kappa^{\alpha}\right\} $
		and $\lambda\in\rng(j_{\zeta,\bar{\chi}})$, we must have that $\lambda=j_{\zeta,\bar{\chi}}(\bar{\lambda})$.
		$\bar{\lambda}$ is of cofinality $\kappa^{\alpha}$ in $V$, so by
		induction also in $N_{\zeta}$, hence by elementarity $N_{\bar{\chi}}=M_{\gamma}^{\beta}\vDash\cof{\lambda}=\kappa^{\alpha}$.\qedhere
	\end{casenv}
\end{proof}

\section{\label{sec:Inner-models-in-C-star}Inner models for short sequences
  of measures in $C^{*}$}

Our last task is to generalize theorem 5.6 of \cite{IMEL} from the
case of an inner model with one measurable to the case of an inner
model with a short sequence of measurables.
\begin{lem}
	\label{lem:sequences of measures-1} Assume $M=L\left[\mathcal{U}\right]$
	where $\mathcal{U}=\left\langle U^{\gamma}\mid\gamma<\chi\right\rangle $
	is in $M$ a short sequence of measures on the increasing sequence
	$\left\langle \kappa^{\gamma}\mid\gamma<\chi\right\rangle $. Denote
	for each $\gamma<\chi$ $\xi^{\gamma}=\big(\big(\kappa^{\gamma}\big)^{+}\big)^{M}$
	and let $f_{\mathcal{U}}:\chi\to2$ be defined by $f_{\mathcal{U}}(\gamma)=0\iff\mathrm{cf}^{V}(\xi^{\gamma})=\omega$.
	If $f_{\mathcal{U}}\in C^{*}$ then $C^{*}$ contains an iteration
	of $M$ by $\mathcal{U}$.
\end{lem}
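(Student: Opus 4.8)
The plan is to mimic the proof of Theorem \ref{thm:C-star-in-L(U)}, where we constructed the iteration of $V=L[\mathcal{U}]$ inside $C^*$ by recovering, one measurable at a time, the relevant Prikry-like sequences of iteration points. The key difference here is that we are working in an arbitrary $V$ containing $M=L[\mathcal{U}]$ as an inner model, so we no longer have $V=L[\mathcal{U}]$; instead we are handed the function $f_{\mathcal{U}}$ which tells us, for each $\gamma$, whether $\xi^\gamma=\left[(\kappa^\gamma)^+\right]^M$ has cofinality $\omega$ in $V$. By Lemma \ref{lem:fixed cof}, $\mathrm{cf}^V(\xi^\gamma)=\delta_M$ is the uniform cofinality along each column, and $f_{\mathcal{U}}$ records whether this value equals $\omega$ at stage $\gamma$; this is exactly the information we need to know \emph{which} flavor of iteration point (cofinality $\omega$ or cofinality $\neq\omega$ in $V$) to harvest when reading off the critical points inside $C^*$.

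First I would iterate $M$ column-by-column, iterating the $\gamma$th measure some appropriate ordinal length $\theta_\gamma$ many times, exactly as in the Notation preceding Lemma \ref{lem:fixed-succ-cof}, choosing the lengths so that the images $\kappa^\gamma_{\theta_\gamma}=\lambda_\gamma$ are a suitable increasing sequence of $V$-regular cardinals. The engine that lets us identify iteration points inside $C^*$ is the pair of Claims \ref{claim:characterize-regulars} and \ref{claim:regulars-agree}: an ordinal $\eta$ in the open interval below $\kappa^\gamma_\beta$ that is regular in $M^\gamma_\beta$ is either an earlier iteration point $\kappa^\gamma_\alpha$ or has $\mathrm{cf}^V(\eta)=\delta_M$, and regularity in the iterate agrees with regularity in $K^*:=(K[D])^{C^*}$. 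So, setting $D_\eta$ to be the sequence of filters generated by the end-segments of the already-recovered sequences $\langle \bar E_\gamma\mid\gamma<\eta\rangle$ and invoking Theorem \ref{thm:mice in C*} to get $K[D_\eta]=(K[D_\eta])^{C^*}$, I would define recursively in $C^*$ the sets
\[
\bar E_\gamma=\{\eta<\lambda_\gamma\mid \eta\text{ is regular in }K^\eta,\ \mathrm{cf}^V(\eta)=\omega\leftrightarrow f_{\mathcal{U}}(\gamma)=0\},
\]
which by the two claims are cofinal in the iteration points $E_\gamma$ of the $\gamma$th column. Here the hypothesis $f_{\mathcal{U}}\in C^*$ is what makes this definition absolute to $C^*$: without knowing $f_{\mathcal{U}}(\gamma)$ we could not decide, inside $C^*$, whether $\delta_M=\omega$ at stage $\gamma$ and hence which cofinality condition to impose. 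Since $\mathrm{cf}^V$-values of $\omega$ versus $\neq\omega$ are detectable via the cofinality quantifier, and $\langle\lambda_\gamma\mid\gamma<\chi\rangle$ can itself be defined recursively in $C^*$ as in Theorem \ref{thm:mice in C*}, the whole sequence $\langle\bar E_\gamma\mid\gamma<\chi\rangle$ lands in $C^*$.

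From the cofinal sequences $\bar E_\gamma$ I recover each ultrafilter $U^\chi_\gamma$ of the final iterate as $F_{\bar E_\gamma}\cap M^\chi$, exactly as in the discussion preceding Claim \ref{claim:cofinal sequences}, and hence the full final model $M^\chi=J_\alpha[\varnothing,F_\chi]$ is computable in $C^*$; this final iterate of $M$ witnesses the conclusion. I expect the main obstacle to be the recursive bookkeeping at limit stages $\gamma$: one must check that the predicate ``$\eta$ is regular in $K^\eta$'' remains $C^*$-computable when $K^\eta$ itself depends on the previously constructed $\langle\bar E_\gamma\mid\gamma<\eta\rangle$, so that the recursion goes through limit stages without the definition becoming circular or leaving $C^*$. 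This is handled precisely as in Theorem \ref{thm:C-star-in-L(U)}, where Theorem \ref{thm:mice in C*} guarantees at each stage that $K[D_\eta]$ is absolute between $V$ and $C^*$, so the recursion is internal to $C^*$ throughout; the one genuinely new ingredient is simply the replacement of the ambient hypothesis $V=L[\mathcal{U}]$ by the externally supplied oracle $f_{\mathcal{U}}$, which is exactly what is assumed to lie in $C^*$.
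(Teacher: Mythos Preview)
Your plan is essentially the paper's own proof: recursively build $\langle\lambda_\gamma\mid\gamma<\chi\rangle$ and $\langle\bar U_\gamma\mid\gamma<\chi\rangle$ inside $C^*$, using Theorem~\ref{thm:mice in C*} at each stage to get $K[D_\gamma]=(K[D_\gamma])^{C^*}$, and using $f_{\mathcal U}$ in place of the single $\delta_M$ to decide which cofinality to harvest; the paper even describes itself as ``combining the methods'' of sections~\ref{sec:Short-mice-C-star} and~\ref{sec:The-C-star-of} in exactly this way.

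Two small slips to fix. First, your biconditional is inverted: since $f_{\mathcal U}(\gamma)=0$ means $\mathrm{cf}^V(\xi^\gamma)=\omega$, the non-iteration-point regulars have cofinality $\omega$ in that case, so you want $\mathrm{cf}^V(\eta)=\omega\leftrightarrow f_{\mathcal U}(\gamma)=1$ (equivalently $\leftrightarrow \mathrm{cf}^V(\xi^\gamma)\ne\omega$), matching the paper's case split. Second, in your displayed definition of $\bar E_\gamma$ the variable $\eta$ is overloaded (it is both the running stage index for $D_\eta$ and the bound ordinal), and ``regular in $K^\eta$'' should read ``regular in $K^\gamma$''. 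Also, the uniformity you invoke along each column is Lemma~\ref{lem:fixed-succ-cof}, not Lemma~\ref{lem:fixed cof}; the latter is about a single mouse and gives a constant $\delta_M$, whereas here the value genuinely depends on $\gamma$, which is precisely why $f_{\mathcal U}$ is needed.
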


\begin{proof}
	We combine the methods used in the previous sections -- from section
	\ref{sec:Short-mice-C-star} we take the method of recursively defining
	the sequence $\left\langle \lambda^{\gamma}\mid\gamma<\chi\right\rangle $
	which gives the lengths of the iterations, and from section \ref{sec:The-C-star-of}
	we take the method of recursively defining the sequence of iterated
	measures $\left\langle \bar{U}^{\gamma}\mid\gamma<\chi\right\rangle $.

	Let $\Theta$ be some regular cardinal (in $V$) greater than $\sup\left\{ \left(2^{\kappa^{\gamma}}\right)^{V}\mid\gamma<\chi\right\} $.
	We define simultaneously sequences $\left\langle \lambda^{\gamma}\mid\gamma<\chi\right\rangle $
	and $\left\langle \bar{U}^{\gamma}\mid\gamma<\chi\right\rangle $
	recursively such that $\lambda_{0}>\Theta$ and for every $\gamma<\chi$:
	\begin{enumerate}
		\item $\left\langle \lambda^{\gamma'}\mid\gamma'<\gamma\right\rangle $
		      and $\left\langle \bar{U}^{\gamma'}\mid\gamma'<\gamma\right\rangle $
		      are in $C^{*}$;
		\item $K^{\gamma}:=K\left[\left\langle \bar{U}^{\gamma'}\mid\gamma'<\gamma\right\rangle \right]=\left(K\left[\left\langle \bar{U}^{\gamma'}\mid\gamma'<\gamma\right\rangle \right]\right)^{C^{*}}$;
		\item $C^{*}\vDash\lambda^{\gamma}$ is a regular cardinal;
		\item $\gamma'<\gamma$ implies $\lambda^{\gamma'}<\lambda^{\gamma}$;
		\item $\forall\beta<\lambda^{\gamma}$ $\exists\eta\in\left(\beta,\lambda^{\gamma}\right)$
		      such that $K^{\gamma}\vDash\eta$ is a regular cardinal, and $\mathrm{cf}^{V}(\eta)=\omega\iff f_{\mathcal{U}}(\gamma)=1$
		      (so $\mathrm{cf}^{V}(\eta)=\omega\iff\mathrm{cf}^{V}(\xi^{\gamma})\ne\omega$).
	\end{enumerate}
	Assume we've defined $\left\langle \lambda^{\gamma'}\mid\gamma'<\gamma\right\rangle ,\left\langle \bar{U}^{\gamma'}\mid\gamma'<\gamma\right\rangle \in C^{*}$
	for some $\gamma<\chi$. By essentially the same argument as in the
	proof of theorem \ref{thm:mice in C*} we show that every regular
	cardinal in $V$ satisfies 3-5, where the main differences are that
	here in $K^{\gamma}$ we take the iterations of the measures -- $\left\langle \bar{U}^{\gamma'}\mid\gamma'<\gamma\right\rangle $
	-- instead of the original ones, and that we need to use $f_{\mathcal{U}}$
	to know which iteration points to choose, instead of the single $\delta_{M}$
	we had there. So in $C^{*}$ we can define $\lambda_{\gamma}$ to
	be the first ordinal $>\sup\left\langle \lambda^{\gamma'}\mid\gamma'<\gamma\right\rangle $
	(or $>\Theta$ if $\gamma=0$) satisfying 3-5. Again by the same
	argument as before we get that in fact $\lambda_{\gamma}$ is the
	image of $\kappa^{\gamma}$ under the iteration of $M$ by the measures
	$\left\langle U^{\gamma'}\mid\gamma'\leq\gamma\right\rangle $ according
	to $\left\langle \lambda^{\gamma'}\mid\gamma'<\gamma\right\rangle ^{\frown}\left\langle \theta_{\gamma}\right\rangle $,
	for some $\theta_{\gamma}$ i.e. $\lambda^{\gamma}=\kappa_{\theta_{\gamma}}^{\gamma}$.
	Now we let
	\[
		E^{\gamma}=\begin{cases}
			\left\{ \eta<\lambda^{\gamma}\mid\eta\text{ is regular in \ensuremath{K^{\gamma}} and }\mathrm{cf}^{V}\left(\eta\right)=\omega\right\} & f_{\mathcal{U}}=1 \\
			\left\{ \eta<\lambda^{\gamma}\mid\eta\text{ is regular in \ensuremath{K^{\gamma}} }\mathrm{cf}^{V}\left(\eta\right)\ne\omega\right\}   & f_{\mathcal{U}}=0
		\end{cases}
	\]
	and get (using the appropriate variations of claims \ref{claim:characterize-regulars}
	and \ref{claim:regulars-agree}) that $E^{\gamma}\in C^{*}$ generates
	the iterated measure $\bar{U}^{\gamma}$ on $\lambda^{\gamma}=\kappa_{\theta_{\gamma}}^{\gamma}$.

	Since all the definitions were done in a recursive way inside $C^{*}$,
	we can proceed through limit stages up to $\chi$ to get that the
	sequences are in $C^{*}$, hence the iteration of $M$ by $\left\langle \theta_{\gamma}\mid\gamma<\chi\right\rangle $
	(which is not in necessarily in $C^{*}$) is in fact determined by
	$\left\langle \lambda^{\gamma}\mid\gamma<\chi\right\rangle $ and
	$\left\langle \bar{U}^{\gamma}\mid\gamma<\chi\right\rangle $ which
	are in $C^{*}$, so this iteration is in $C^{*}$.
\end{proof}
\begin{cor}
	If there is an inner model with a short sequence of measures of order
	type $\chi$ then there is such an inner model inside $C^{*}$.
\end{cor}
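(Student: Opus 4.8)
The plan is to reduce the corollary to Lemma~\ref{lem:sequences of measures-1} by showing that its hypothesis $f_{\mathcal{U}}\in C^{*}$ can always be arranged. Suppose $M=L\left[\mathcal{U}\right]$ is an inner model with $\mathcal{U}=\left\langle U_{\gamma}\mid\gamma<\chi\right\rangle$ a short sequence of measures of order type $\chi$. The obstruction to applying the lemma directly is that the function $f_{\mathcal{U}}:\chi\to 2$, recording whether $\mathrm{cf}^{V}(\xi^{\gamma})=\omega$ for each $\gamma$, need not lie in $C^{*}$ for the given $\mathcal{U}$. The strategy is to first pass to a convenient \emph{iterate} of $M$ for which the corresponding cofinality pattern becomes simple enough that the analogue of $f_{\mathcal{U}}$ is trivially in $C^{*}$, and then invoke the lemma on that iterate.

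First I would iterate $M$ to ``stretch out'' the successors of the measurables so that their $V$-cofinalities are uniformized. Concretely, for each $\gamma<\chi$ I would iterate the $\gamma$th measure sufficiently many times so that the image $\xi^{\gamma}$ of $\left(\kappa^{\gamma}\right)^{+}$ acquires a prescribed $V$-cofinality; by Lemma~\ref{lem:fixed-succ-cof}, once we fix the length of each iteration, the cofinality of the relevant successor is determined and is preserved under the remaining iterations, so the pattern stabilizes. The point of this preliminary stretching is to produce an iterate $M'=L\left[\mathcal{U}'\right]$ whose associated function $f_{\mathcal{U}'}$ is, say, identically $1$ (or otherwise constant), and a constant function is certainly in $C^{*}$. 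Since any iterate of an inner model with a short sequence of measures of order type $\chi$ is again an inner model with a short sequence of measures of order type $\chi$, this stretching does not cost us anything: $M'$ is still a witness to the hypothesis of the corollary.

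With $f_{\mathcal{U}'}\in C^{*}$ secured, Lemma~\ref{lem:sequences of measures-1} applies to $M'$ and yields that $C^{*}$ contains an iteration of $M'$ by $\mathcal{U}'$. That iteration is itself of the form $L\left[\mathcal{U}''\right]$ for a short sequence of measures $\mathcal{U}''$ of the same order type $\chi$ (iterating preserves both shortness and order type), and it is an inner model computed inside $C^{*}$. Hence $C^{*}$ contains an inner model with a short sequence of measures of order type $\chi$, which is exactly the conclusion.

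The main obstacle I expect is the first step: arranging an iterate whose cofinality function is in $C^{*}$. The subtlety is that one must iterate \emph{each} of the $\chi$-many measures by a length chosen so that the resulting pattern of $V$-cofinalities of the successors is simultaneously controlled, while keeping the whole construction internal enough that the output is a genuine inner model; Lemma~\ref{lem:fixed-succ-cof} is the key tool making each individual cofinality predictable and stable, but threading these choices coherently across all $\gamma<\chi$ (especially through limit stages of the length-$\chi$ iteration) is where the real work lies. Once the cofinality function is made constant, everything else is a direct appeal to the lemma.
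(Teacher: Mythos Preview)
Your strategy has a genuine gap at the very first step. You propose to iterate $M=L[\mathcal{U}]$ so that the $V$-cofinalities of the successors $\xi^{\gamma}$ become uniformized, and you invoke Lemma~\ref{lem:fixed-succ-cof} for this. But that lemma says exactly the opposite of what you need: it shows that $\mathrm{cf}^{V}(\xi_{\beta}^{\gamma})=\mathrm{cf}^{V}(\xi(\gamma))$ for every step of the iteration, i.e.\ the $V$-cofinality of the successor of the $\gamma$th measurable is \emph{preserved} by iteration, not altered. The same computation applies to the inner-model setting: any element below the image of $\xi^{\gamma}$ is of the form $j(f)(\kappa_{1},\dots,\kappa_{n})$ for some $f$ with bounded range in $\xi^{\gamma}$, so the image of a cofinal sequence in $\xi^{\gamma}$ remains cofinal. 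Iterating by lower measures does not help either (the relevant functions have domain of size $<\xi^{\gamma}$ and $\xi^{\gamma}$ is regular in $M$), and iterating by higher measures fixes $\xi^{\gamma}$ outright. Consequently $f_{\mathcal{U}'}=f_{\mathcal{U}}$ for any iterate $\mathcal{U}'$ of $\mathcal{U}$, and your ``stretching'' cannot produce a constant (or otherwise $C^{*}$-simple) cofinality pattern.

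The paper's argument avoids this obstacle by a completely different, purely combinatorial move: rather than iterating, it passes to a \emph{subsequence} of $\mathcal{U}$ of the same order type $\chi$ on which $f_{\mathcal{U}}$ is piecewise constant. Writing $\chi=\omega^{\beta_{0}}+\dots+\omega^{\beta_{n}}$ in Cantor normal form and using that each ordinal $\omega^{\beta}$ is additively indecomposable (so any $2$-coloring of it has a monochromatic subset of full order type), one extracts from each block a subset of the same order type on which $f_{\mathcal{U}}$ is constant. The reindexed sequence $\tilde{\mathcal{U}}$ then has $f_{\tilde{\mathcal{U}}}$ equal to a finite concatenation of constant functions, which is trivially in $C^{*}$, and Lemma~\ref{lem:sequences of measures-1} applies to $L[\tilde{\mathcal{U}}]$. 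Thinning the sequence, not iterating the model, is the missing idea.
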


\begin{proof}
	Let $M=L\left[\mathcal{U}\right]$ where $\mathcal{U}$ is a short
	sequence of measures of order type $\chi$, and let $f_{\mathcal{U}}:\chi\to2$
	be defined by $f_{\mathcal{U}}(\gamma)=0\iff\mathrm{cf}^{V}(\xi^{\gamma})=\omega$.
	We will show that there is $\tilde{\mathcal{U}}$ which is also of
	order type $\chi$ such that $f_{\tilde{\mathcal{U}}}\in C^{*}$,
	so we'll be done by the previous lemma applied to $L[\mathcal{\tilde{U}}]$.
	\begin{lem}
		Let $\alpha=\omega^{\beta}$, and let $f:\alpha\to2$. Then there
		is $A\con\alpha$, $\otp A=\alpha$ such that $\left|f``A\right|=1$.
	\end{lem}

	\begin{proof}
		Otherwise $\alpha$ is the disjoint union of two sets of order-type
		$<\alpha$, $A_{i}=f^{-1}\left\{ i\right\} $ for $i=0,1$. We prove
		that this is impossible. If $\beta=0$ then $\alpha=1$ so this is
		clear. If $\beta=\gamma+1$, consider $\omega^{\beta}=\omega^{\gamma}\cdot\omega$.
		So $\max\left\{ \otp A_{0},\otp A_{1}\right\} <\omega^{\gamma}\cdot n$
		for some $n$, so $\otp(A_{0}\cup A_{1})<\omega^{\gamma}\cdot n+\omega^{\gamma}\cdot n=\omega^{\gamma}\cdot2n<\omega^{\gamma}\cdot\omega=\omega^{\beta}$
		by contradiction. For $\beta$ limit, $\omega^{\beta}=\sup\left\{ \omega^{\gamma}\mid\gamma<\beta\right\} $,
		$\max\left\{ \otp A_{0},\otp A_{1}\right\} <\omega^{\gamma}$ for
		some $\gamma$. So $\otp(A_{0}\cup A_{1})<\omega^{\gamma}+\omega^{\gamma}<\omega^{\gamma+1}<\omega^{\beta}$
		by contradiction.
	\end{proof}
	Now by Cantor normal form, there are $\beta_{0}\geq\beta_{1}\geq\dots\geq\beta_{n}\geq0$
	such that $\chi=\omega^{\beta_{0}}+\dots+\omega^{\beta_{n}}$, so
	$\chi$ is partitioned into the finitely many successive segments
	\begin{align*}
		I_{0} & =\left[0,\omega^{\beta_{0}}\right).                                                                     \\
		I_{1} & =\left[\omega^{\beta_{0}},\omega^{\beta_{0}}+\omega^{\beta_{1}}\right)                                  \\
		      & \vdots                                                                                                  \\
		I_{n} & =\left[\omega^{\beta_{0}}+\dots+\omega^{\beta_{n-1}},\omega^{\beta_{0}}+\dots+\omega^{\beta_{n}}\right)
	\end{align*}
	Each segment $I_{k}$ is order isomorphic to the corresponding $\omega^{\beta_{k}}$,
	so there is $A_{k}\con I_{k}$ such that $\otp A_{k}=\omega^{\beta_{k}}$
	and $\left|f_{\mathcal{U}}``A_{k}\right|=1$. Clearly $\otp\left(\bigcup_{i=0}^{n}A_{k}\right)=\chi$
	so let $\phi:\chi\to\bigcup_{i=0}^{n}A_{k}$ be the order isomorphism
	and define $\tilde{\mathcal{U}}$ by $\tilde{U}_{\gamma}=U_{\phi(\gamma)}$.
	Then $f_{\mathcal{\tilde{U}}}=\left(f_{\mathcal{U}}\mets\bigcup_{i=0}^{n}A_{k}\right)\circ\phi$
	is simply the finite union of constant functions, so it is indeed
	in $C^{*}$ as required.
\end{proof}

\section{Further generalizations and open questions}

In this paper we dealt only with short sequences of measure, and in
fact used shortness a few times. A natural question is whether similar
results can be obtained also for longer sequences of measures $\mathcal{U}=\left\langle U^{\gamma}\mid\gamma<\chi\right\rangle $
where we do not require $\chi<\kappa^{0}$, and whether we can allow
also measurable limits of measurables, or even measurables of higher
Mitchel order.

In the forthcoming \cite{welch2022C*} Welch shows that as long as
there are no measurable limits of measurables, the same characterization
holds. To be precise, denote by $O^{k}$ (O-kukri) the assumption
that there is an elementary embedding of an inner model with a proper
class of measurables to itself (cf. \cite{welch2022closed}). Then
Welch shows:
\begin{thm*}
	Assume $\neg O^{k}$. If $V=L[\vec{\mu}]=L[\left\langle \mu_{\alpha}\mid\alpha<\tau\right\rangle ]$
	where $\tau\leq\ord$, and $\tilde{M}$ is the model obtained by iterating
	every $\mu_{\alpha}$ $\omega^{2}$ many times, then $C^{*}=\tilde{M}[\left\langle c(\alpha)\mid\alpha<\tau\right\rangle $where
	$c(\alpha)$ is the $\omega$-subsequence of limit points of the $\omega^{2}$-sequence
	of critical points leading to the $\alpha$th measurable of $\tilde{M}$.
\end{thm*}
In \cite{welch2022closed} Welch also proves that if $O^{k}$ exists,
then it is in $C^{*}$, so the question of generalizing this result
to the case of measurable limit of measurables is still open. It is
also still unknown whether there can, relative to any large cardinal
hypothesis, be a measurable \emph{in} $C^{*}$ itself.

\bibliographystyle{amsplain}
\bibliography{Bibliography}

\providecommand{\bysame}{\leavevmode\hbox to3em{\hrulefill}\thinspace}
\providecommand{\MR}{\relax\ifhmode\unskip\space\fi MR }
% \MRhref is called by the amsart/book/proc definition of \MR.
\providecommand{\MRhref}[2]{%
  \href{http://www.ams.org/mathscinet-getitem?mr=#1}{#2}
}
\providecommand{\href}[2]{#2}
\begin{thebibliography}{1}

\bibitem{IMEL}
Juliette Kennedy, Menachem Magidor, and Jouko V{\"a}{\"a}n{\"a}nen, \emph{Inner
  models from extended logics: Part 1}, Journal of Mathematical Logic
  \textbf{21} (2021), no.~02, 2150012.

\bibitem{koepkedoc}
Peter Koepke, \emph{A theory of short core models and applications}, Ph.D.
  thesis, University of Freiburg, 1983.

\bibitem{koepke1988}
\bysame, \emph{Some applications of short core models}, Annals of pure and
  applied logic \textbf{37} (1988), no.~2, 179--204.

\bibitem{lucke-schlicht}
Philipp L{\"u}cke and Philipp Schlicht, \emph{Measurable cardinals and good
  ${\Sigma_1(\kappa)}$-wellorderings}, Mathematical Logic Quarterly \textbf{64}
  (2018), no.~3, 207--217.

\bibitem{mitchell1984core}
William~J. Mitchell, \emph{The core model for sequences of measures. {I}},
  Mathematical Proceedings of the Cambridge Philosophical Society, vol.~95,
  Cambridge University Press, 1984, pp.~229--260.

\bibitem{handbook-finestructure}
Ralf Schindler and Martin Zeman, \emph{Fine structure}, Handbook of set theory,
  Springer, 2010, pp.~605--656.

\bibitem{welch2022C*}
Philip~D. Welch, \emph{{$C^*$ in $L[E]$-models below $O^k$}}, Personal
  communication (2022).

\bibitem{welch2022closed}
\bysame, \emph{Closed and unbounded classes and the {H\"artig} quantifier
  model}, The Journal of Symbolic Logic \textbf{87} (2022), no.~2, 564--584.

\bibitem{yaar-iterating-Cstar}
Ur~Ya'ar, \emph{Iterating the cofinality-{$\omega$} constructible model}, The
  Journal of Symbolic Logic \textbf{88}, no.~4, 1682--1691.

\end{thebibliography}

\end{document}